\theoremstyle{plain}
   \newtheorem{theorem}{Theorem}[section]
   \newtheorem{proposition}[theorem]{Proposition}
   \newtheorem{lemma}[theorem]{Lemma}
   \newtheorem{corollary}[theorem]{Corollary}
\theoremstyle{definition}
   \newtheorem{example}[theorem]{Example}
\theoremstyle{remark}
   \newtheorem{remark}[theorem]{Remark}
\numberwithin{equation}{section}
\def\kk{\kern.2ex\mbox{\raise.5ex\hbox{{\rule{.35em}{.12ex}}}}\kern.2ex}
\newcommand{\midarrow}{\tikz \draw[-triangle 90] (0,0) -- +(.1,0);}
\title[Stable multivariate generalizations of matching polynomials]{Stable multivariate generalizations of matching polynomials} 
\author{Nima Amini}
\address{Department of Mathematics, Royal Institute of Technology, SE-100 44 Stockholm,
Sweden}
\email{namini@kth.se}
\begin{document}
\maketitle
\begin{abstract}
The first part of this note concerns stable averages of multivariate matching polynomials.
In proving the existence of infinite families of bipartite Ramanujan $d$-coverings, Hall, Puder and Sawin introduced the $d$-matching polynomial of a graph $G$, defined as the uniform average of matching polynomials over the set of $d$-sheeted covering graphs of $G$. We prove that a natural multivariate version of the $d$-matching polynomial is stable, consequently giving a short direct proof of the real-rootedness of the $d$-matching polynomial. Our theorem also includes graphs with loops, thus answering a question of said authors. Furthermore we define a weaker notion of matchings for hypergraphs and prove that a family of natural polynomials associated to such matchings are stable. In particular this provides a hypergraphic generalization of the classical Heilmann-Lieb theorem.
\end{abstract}

\renewcommand\theenumi{\roman{enumi}}
\section{Introduction}
The real-rootedness of the matching polynomial of a graph is a well-known result in algebraic graph theory due to Heilmann and Lieb \cite{HL}. Slightly less quoted is its stronger multivariate counterpart (see \cite{HL}) which proclaims that the multivariate matching polynomial is non-vanishing when its variables are restricted to the upper complex half-plane, a property known as \emph{stability}. Other stable polynomials occurring in combinatorics include e.g. multivariate Eulerian polynomials \cite{HV}, several bases generating polynomials of matroids (including multivariate spanning tree polynomials) \cite{COSW} and certain multivariate subgraph polynomials \cite{Wagb}.
In the present note we consider several different stable generalizations of multivariate matching polynomials. 
Hall, Puder and Sawin prove in \cite{HPS} that every connected bipartite graph has a Ramanujan $d$-covering of every degree for each $d \geq 1$, generalizing seminal work of Marcus, Spielman and Srivastava \cite{MSS,MSSb} for the case $d=2$. An important object in their proof is a certain generalization of the matching polynomial of a graph $G$, called the \emph{$d$-matching polynomial}, defined by taking averages of matching polynomials over the set of $d$-sheeted covering graphs of $G$. The authors prove (via an indirect method) that the $d$-matching polynomial of a multigraph is real-rooted provided that the graph contains no loops. We prove in Theorem \ref{thm:multidmatchpoly} that the latter hypothesis is redundant by establishing a stronger result, namely that the multivariate $d$-matching polynomial is stable for any multigraph (possibly with loops). In the final section we consider a hypergraphic generalization of the Heilmann-Lieb theorem. The hypergraphic matching polynomial is not real-rooted in general (see \cite{GZM}) so it does not admit a natural stable multivariate refinement. However by relaxing the notion of matchings in hypergraphs we prove in Theorem \ref{thm:wksubgpolystab} that an associated ``relaxed'' multivariate matching polynomial is stable. 
%The polynomial that we construct moreover reduces to the ordinary multivariate matching polynomial for graphs.

\section{Preliminaries}
\subsection{Graph coverings and group labelings}
In this subsection we outline relevant definitions from \cite{HPS}.
Let $G = (V(G), E(G))$ be a finite, connected, undirected graph on $[n]$. In particular we allow $G$ to have multiple edges between vertices and contain edges from a vertex to itself, i.e., $G$ is a multigraph with loops. 

A graph homomorphism $f:H \to G$ is called a \textit{local isomorphism} if for each vertex $v$ in $H$, the restriction of $f$ to the neighbours of $v$ in $H$ is an isomorphism onto the neighbours $f(v)$ in $G$. We call $f$ a \textit{covering map} if it is a surjective local isomorphism, in which case we say that $H$ \textit{covers} $G$. If the image of $H$ under the covering map $f$ is connected, then each fiber $f^{-1}(v)$ of $v \in V(G)$ is an independent set of vertices in $H$ of the same size $d$. If so, we call $H$ a \textit{$d$-sheeted covering} (or \textit{$d$-covering} for short) of $G$.

Although $G$ is undirected we shall dually view it as an oriented graph, containing two edges with opposite orientation for each undirected edge. We denote the edges with positive (resp. negative) orientation by $E^{+}(G)$ (resp. $E^{-}(G)$) and identify $E(G)$ with the disjoint union $E^{+}(G) \sqcup E^{-}(G)$. If $e \in E^{\pm}(G)$, then we write $-e$ for the corresponding edge in $E^{\mp}(G)$ with opposite orientation. Moreover we denote by $h(e)$ and $t(e)$, the head and tail of the edge $e \in E(G)$ respectively.
A $d$-covering $H$ of a graph $G$ can be constructed via the following model, introduced in \cite{AL, Fri}. The vertices of $H$ are defined by $V(H) \coloneqq \{ v_i : v \in V(G), 1 \leq i \leq d \}$. The edges of $H$ are determined, as described below, by a labeling $\sigma:E(G) \to S_d$ (see Figure \ref{fig:grplabeling}) satisfying $\sigma(-e) = \sigma(e)^{-1}$. For notational purposes we write $\sigma(e) = \sigma_e$. For every positively oriented edge $e \in E^{+}(G)$ we introduce $d$ (undirected) edges in $H$ connecting $h(e)_i$ to $t(e)_{\sigma_e(i)}$ for $1 \leq i \leq d$, that is, we replace each undirected edge $e$ in $G$ by the perfect matching induced by $\sigma_e$, see Figure \ref{fig:covergraph}. We shall interchangeably refer to the map $\sigma$ and the covering graph $H$ which it determines, as a covering of $G$. Let $\mathcal{C}_{d,G}$ denote the probability space of all $d$-coverings of $G$ endowed with the uniform distribution.

Instead of labeling each edge in $G$ by a permutation in $S_d$ we may label the edges with elements coming from an arbitrary finite group $\Gamma$. A \textit{$\Gamma$-labeling} of a graph $G$ is a function $\gamma:E(G) \to \Gamma$ satisfying $\gamma(-e) = \gamma(e)^{-1}$. Let $\mathcal{C}_{\Gamma, G}$ denote the probability space of all $\Gamma$-labelings of $G$ endowed with the uniform distribution.  Let $\pi:\Gamma \to \text{GL}_d(\mathbb{C})$ be a representation of $\Gamma$. For any $\Gamma$-labeling $\gamma$ of $G$, let $A_{\gamma,\pi}$ denote the $nd \times nd$ matrix obtained from the adjacency matrix $A_G$ of $G$ by replacing the $(i,j)$ entry in $A_G$ with the $d \times d$ block $\sum_{e \in E(G)} \pi(\gamma(e))$ (where the sum runs over all oriented edges from $i$ to $j$) and by a zero block if there are no edges between $i$ and $j$. The matrix $A_{\gamma,\pi}$ is called a \textit{$(\Gamma, \pi)$-covering} of $G$.

\begin{figure}
\begin{tikzpicture}
\coordinate (A) at (0.2,0);
\coordinate (B) at (1.25,-1);
\coordinate (C) at (2.3,0);
\coordinate (D) at (3.9,0);

\draw [very thick,red!80] (A) -- (B);
\draw [very thick,green!80] (A) -- (C);
\draw [very thick,blue!80] (B) -- (C);
\draw [very thick,orange!80] (C) -- (D);
\draw[very thick, purple!80, scale=4] (D.-10)  to[in=45,out=-45,loop] (D.-10);
%\draw[very thick, purple!80] (D.-10) arc (180:180+350:5mm);

\draw[fill=black] (A) circle (0.08cm);
\draw[fill=black] (B) circle (0.08cm);
\draw[fill=black] (C) circle (0.08cm);
\draw[fill=black] (D) circle (0.08cm);

\node[yshift = 0.25cm] at (A) {$a$};
\node[yshift = -0.3cm] at (B) {$b$};
\node[yshift = 0.25cm] at (C) {$c$};
\node[xshift=-0.1cm ,yshift = 0.25cm] at (D) {$d$};

\node[xshift = 0.35cm,yshift = -0.7cm] at (A) {$\sigma_1$};
\node[xshift = 1.1cm,yshift = 0.2cm] at (A) {$\sigma_2$};
\node[xshift = 1.8cm,yshift = -0.7cm] at (A) {$\sigma_3$};
\node[xshift = 2.9cm,yshift = 0.2cm] at (A) {$\sigma_4$};
\node[xshift = 5.1cm,yshift = 0cm] at (A) {$\sigma_5$};
\end{tikzpicture}
\caption{A $S_4$-labeling $\gamma$ of a graph $G$ with $\gamma =(\sigma_1,\sigma_2,\sigma_3, \sigma_4,\sigma_5) =  ((1 \thickspace 2),\thickspace (1 \thickspace 2)(3 \thickspace 4),\thickspace (1\thickspace 3 \thickspace 2),\thickspace (1\thickspace 2 \thickspace 3 \thickspace 4),\thickspace (1 \thickspace 2 \thickspace 3))$.}
\label{fig:grplabeling}
\end{figure}
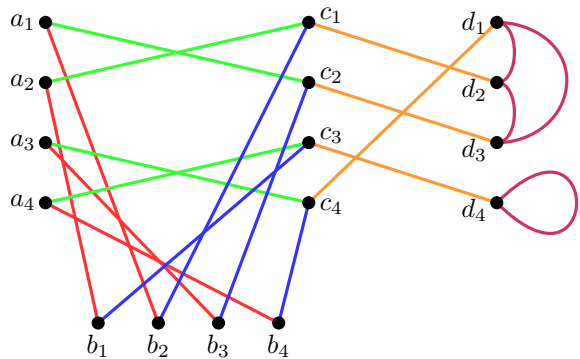
\begin{figure}
\begin{tikzpicture}
%\draw [ultra thick] (0,0) --(2,0);
%\draw [thin] (0,0) -- (0,-2.5);
%\draw [thin] (1,0) -- (1,-2.5);
%\draw [thin] (2,0) -- (2,-2.5);
\coordinate (A1) at (0,0);
\coordinate (A2) at (0,-0.8);
\coordinate (A3) at (0,-1.6);
\coordinate (A4) at (0,-2.4);

%\coordinate (B1) at (2,0-0.4);
%\coordinate (B2) at (2,-1-0.4);
%\coordinate (B3) at (2,-2-0.4);
%\coordinate (B4) at (2,-3-0.4);

\coordinate (B1) at (0.5+0.2,-4);
\coordinate (B2) at (1.3+0.2,-4);
\coordinate (B3) at (2.1+0.2,-4);
\coordinate (B4) at (2.9+0.2,-4);

\coordinate (C1) at (3.5, 0);
\coordinate (C2) at (3.5,-0.8);
\coordinate (C3) at (3.5,-1.6);
\coordinate (C4) at (3.5,-2.4);

\coordinate (D1) at (6,0);
\coordinate (D2) at (6,-0.8);
\coordinate (D3) at (6,-1.6);
\coordinate (D4) at (6,-2.4);

\draw [very thick,red!80] (A1) -- (B2);
\draw [very thick,red!80] (A2) -- (B1);
\draw [very thick,red!80] (A3) -- (B3);
\draw [very thick,red!80] (A4) -- (B4);

\draw [very thick,green!80] (A1) -- (C2);
\draw [very thick,green!80] (A2) -- (C1);
\draw [very thick,green!80] (A3) -- (C4);
\draw [very thick,green!80] (A4) -- (C3);

\draw [very thick,blue!80] (B1) -- (C3);
\draw [very thick,blue!80] (B2) -- (C1);
\draw [very thick,blue!80] (B3) -- (C2);
\draw [very thick,blue!80] (B4) -- (C4);

\draw [very thick,orange!80] (C1) -- (D2);
\draw [very thick,orange!80] (C2) -- (D3);
\draw [very thick,orange!80] (C3) -- (D4);
\draw [very thick,orange!80] (C4) -- (D1);

%\draw[very thick, purple!80] (D4.-10) arc (180:180+350:5mm);

\draw[very thick, purple!80, scale=4] (D4.-10)  to[in=45,out=-45,loop] (D4.-10);

\draw[very thick,purple!80,scale=1.1] (D3) to[out=0,in=-90] (6.3,-0.7) to[out=90, in=0] (D1);
\draw[very thick,purple!80] (D3) to[bend right=90, bend angle=90] (D2);
\draw[very thick,purple!80] (D2) to[bend right=90, bend angle=90] (D1);

\draw[fill=black] (A1) circle (0.08cm);
\draw[fill=black] (A2) circle (0.08cm);
\draw[fill=black] (A3) circle (0.08cm);
\draw[fill=black] (A4) circle (0.08cm);

\draw[fill=black] (B1) circle (0.08cm);
\draw[fill=black] (B2) circle (0.08cm);
\draw[fill=black] (B3) circle (0.08cm);
\draw[fill=black] (B4) circle (0.08cm);

\draw[fill=black] (C1) circle (0.08cm);
\draw[fill=black] (C2) circle (0.08cm);
\draw[fill=black] (C3) circle (0.08cm);
\draw[fill=black] (C4) circle (0.08cm);

\draw[fill=black] (D1) circle (0.08cm);
\draw[fill=black] (D2) circle (0.08cm);
\draw[fill=black] (D3) circle (0.08cm);
\draw[fill=black] (D4) circle (0.08cm);

\node[xshift = -0.3cm] at (A1) {$a_1$};
\node[xshift = -0.3cm] at (A2) {$a_2$};
\node[xshift = -0.3cm] at (A3) {$a_3$};
\node[xshift = -0.3cm] at (A4) {$a_4$};

\node[yshift = -0.3cm] at (B1) {$b_1$};
\node[yshift = -0.3cm] at (B2) {$b_2$};
\node[yshift = -0.3cm] at (B3) {$b_3$};
\node[yshift = -0.3cm] at (B4) {$b_4$};

\node[xshift = 0.3cm, yshift = 0.1cm] at (C1) {$c_1$};
\node[xshift = 0.3cm, yshift = 0.1cm] at (C2) {$c_2$};
\node[xshift = 0.3cm, yshift = 0.1cm] at (C3) {$c_3$};
\node[xshift = 0.3cm, yshift = -0.05cm] at (C4) {$c_4$};

\node[xshift = -0.3cm] at (D1) {$d_1$};
\node[xshift = -0.3cm, yshift = -0.1cm] at (D2) {$d_2$};
\node[xshift = -0.3cm, yshift = -0.1cm] at (D3) {$d_3$};
\node[xshift = -0.3cm, yshift = -0.1cm] at (D4) {$d_4$};

\end{tikzpicture}
\caption{The $4$-sheeted covering graph $H$ corresponding to the $S_4$-labeling $\gamma$ of $G$ in Figure \ref{fig:grplabeling}.} 
\label{fig:covergraph}
\end{figure}\noindent

Consider the $d$-dimensional representation $\pi:S_d \to \text{GL}_d(\mathbb{C})$ of the symmetric group $S_d$ mapping every $\sigma \in S_d$ to its corresponding permutation matrix. The representation $\pi$ is reducible since the $1$-dimensional space $\langle \mathbf{1} \rangle \leq \mathbb{C}^d$, where $\mathbf{1} = (1,\dots, 1)$, is invariant under the action of $\pi$. The action of $\pi$ on the orthogonal complement $\langle \mathbf{1} \rangle^{\perp}$ is an irreducible $(d-1)$-dimensional representation called the \textit{standard representation}, denoted $\text{std}:S_d \to \text{GL}_{d-1}(\mathbb{C})$. As outlined in \cite{HPS}, every $d$-covering $H$ of $G$ corresponds uniquely to a $(S_d, \text{std})$-covering of $G$.
\subsection{Stable polynomials}
A polynomial $f(\mathbf{x}) \in \mathbb{C}[x_1,\dots, x_n]$ is said to be \textit{stable} if $f(x_1,\dots, x_n) \neq 0$ whenever $\text{Im}(x_j) > 0$ for all $j = 1,\dots, n$. By convention we also regard the zero polynomial to be stable. A stable polynomial with only real coefficients is said to be \textit{real stable}. Note that univariate real stable polynomials are precisely the real-rooted polynomials (i.e. real polynomials in one variable with all zeros in $\mathbb{R}$). Thus stability may be regarded as a multivariate generalization of real-rootedness. Below we collect a few facts about stable polynomials which are relevant for the forthcoming sections. For a more comprehensive background we refer to the survey by Wagner \cite{Wag} and references therein. 

A common technique for proving that a polynomial $f(\mathbf{x})$ is stable is to realize $f(\mathbf{x})$ as the image of a known stable polynomial under a stability preserving linear transformation.
Stable polynomials satisfy several basic closure properties, among them are diagonalization $f \mapsto f(\mathbf{x})|_{x_i = x_j}$ for $i,j \in [n]$ and differentiation $f \mapsto \partial_i f(\mathbf{x})$ where $\partial_i \coloneqq \frac{\partial}{\partial x_i}$. The following theorem by Lieb and Sokal provides the construction for a large family of stability preserving linear transformations.
\begin{theorem}[Lieb-Sokal \cite{LS}] \label{thm:liebsokal}
If $f(x_1,\dots, x_n) \in \mathbb{C}[x_1,\dots, x_n]$ is a stable polynomial, then $f\left( \partial_1,\dots, \partial_n \right )$ is a stability preserving linear operator. 
\end{theorem}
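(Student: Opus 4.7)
My plan is to reduce the theorem to a single-variable \emph{specialization lemma} and iterate it $n$ times, using a tensor-product construction as the starting point. Let $g(x_1,\dots,x_n)\in \mathbb{C}[x_1,\dots,x_n]$ be an arbitrary stable polynomial; the goal is to show that $f(\partial_1,\dots,\partial_n)g$ is stable. First form the product
\[
H(y_1,\dots,y_n,x_1,\dots,x_n) \;:=\; f(y_1,\dots,y_n)\,g(x_1,\dots,x_n),
\]
which is stable in $2n$ variables because the product of two stable polynomials in disjoint variable sets is stable (immediate from the definition). The remaining task is to contract each auxiliary variable $y_i$ against $\pm\partial_{x_i}$ in a stability-preserving fashion, so that $n$ such contractions turn $H$ into $f(\partial_1,\dots,\partial_n)g(\mathbf{x})$ (modulo a sign convention that can be absorbed at the outset by replacing $f(\mathbf{y})$ with $f(-\mathbf{y})$, which remains stable after the parallel adjustment in the iteration step).

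The central step is the following \emph{specialization lemma}: if $F(w,z_1,\dots,z_m)\in \mathbb{C}[w,z_1,\dots,z_m]$ is stable, then writing $F = \sum_{k\ge 0} c_k(\mathbf{z})\,w^k$, the linear differential operator $p(\mathbf{z}) \mapsto \sum_{k\ge 0} c_k(\mathbf{z})\,(-\partial_{z_j})^k p(\mathbf{z})$ sends stable polynomials $p\in \mathbb{C}[z_1,\dots,z_m]$ to stable polynomials. To prove it I would introduce the auxiliary polynomial
\[
G(w,\mathbf{z}) \;:=\; F(w,\mathbf{z}) \cdot p(\mathbf{z}+w\mathbf{e}_j),
\]
which is stable in $(w,\mathbf{z})$ as a product of stable factors (stability of the shift $p(\mathbf{z}+w\mathbf{e}_j)$ follows from $\mathrm{Im}(z_j+w)>0$ on the open upper polydisk). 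A controlled sequence of differentiations of $G$ with respect to $w$, followed by the real specialization $w=0$, then isolates the combination $\sum_k c_k(\mathbf{z})(-\partial_{z_j})^k p(\mathbf{z})$. Stability is preserved at each step by the closure of the stable class under differentiation and under specialization at real points, and by Hurwitz's theorem for the limiting behaviour.

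Applying the specialization lemma successively, once for each $y_i$ (with $-\partial_{x_i}$ as the chosen derivative), transforms $H(\mathbf{y},\mathbf{x})$ into a stable expression for $f(-\partial_1,\dots,-\partial_n)g(\mathbf{x})$; the sign reconciliation noted above then yields stability of $f(\partial_1,\dots,\partial_n)g(\mathbf{x})$, completing the proof. The main obstacle in this plan is the specialization lemma of the second paragraph, whose delicate analytic content lies in controlling zeros at the boundary of the upper half-plane during the limit; this is precisely where Hurwitz's theorem is essential. The tensor-product set-up and the iteration are formal consequences of the standard closure properties of stable polynomials already recorded above.
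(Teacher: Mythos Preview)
The paper does not prove this theorem at all; it is quoted from \cite{LS} and used as a black box. So there is no ``paper's proof'' to compare against, and your proposal must be judged on its own.

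Your overall architecture (form the tensor product $f(\mathbf{y})g(\mathbf{x})$, then eliminate one $y_i$ at a time by a ``specialization lemma'' that trades $y_i$ for $-\partial_{x_i}$) is the standard route. The gap is in the execution of the specialization lemma. From
\[
G(w,\mathbf{z}) = F(w,\mathbf{z})\,p(\mathbf{z}+w\mathbf{e}_j)
= \Bigl(\sum_{k}c_k(\mathbf{z})\,w^k\Bigr)\Bigl(\sum_{m}\tfrac{w^m}{m!}\,\partial_{z_j}^{\,m}p(\mathbf{z})\Bigr)
\]
the coefficient of $w^N$ is the \emph{convolution} $\sum_{k+m=N} c_k(\mathbf{z})\,\tfrac{1}{m!}\partial_{z_j}^{\,m}p(\mathbf{z})$. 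No finite string of $\partial_w$'s followed by the real specialization $w=0$ extracts the diagonal combination $\sum_k c_k(\mathbf{z})(-\partial_{z_j})^k p(\mathbf{z})$; you only ever recover these convolutions (and the signs come out as $+\partial_{z_j}$, not $-\partial_{z_j}$). The sentence ``a controlled sequence of differentiations \dots\ isolates the combination'' is therefore where the argument breaks. A correct version of this step proceeds differently: one first proves the degree-one case (if $A(\mathbf{z})+wB(\mathbf{z})$ is stable then $A-\partial_{z_j}B$ is stable, via a Gauss--Lucas computation showing $\mathrm{Im}\bigl(\partial_{z_j}g/g\bigr)\le 0$ on the open upper polydisc), and then inducts on $\deg_w F$.

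There is a second, related problem with your sign reconciliation. You assert that replacing $f(\mathbf{y})$ by $f(-\mathbf{y})$ ``remains stable''. For \emph{real} stable $f$ this is true, but for complex $f$ it is false: $f(y)=y+i$ is stable while $f(-y)=-y+i$ vanishes at $y=i$. In fact the symbol computation via Theorem~\ref{thm:borbra} gives $G_{f(\boldsymbol{\partial})}(\mathbf{x},\mathbf{y})=f(-\mathbf{y})\prod_i e^{-x_iy_i}$, so for general complex $f$ it is $f(-\boldsymbol{\partial})$ that preserves stability. This does not affect the paper's applications (which use only real stable $f$, where the sign is immaterial), but your reduction as written does not close for complex coefficients.
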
 \noindent
Borcea and Br\"and\'en \cite{BBa} gave a complete characterization of the linear operators preserving stability. The following is the transcendental characterization of stability preservers on infinite-dimensional complex polynomial spaces. Define the \textit{complex Laguerre-P\'olya class} to be the class of entire functions in $n$ variables that are limits, uniformly on compact sets of stable polynomials in $n$ variables. 
Throughout we will use the following multi-index notation 
\[
\mathbf{x}^{S} \coloneqq \prod_{i\in S}^n x_i \hspace{1cm} \mathbf{x}^{\boldsymbol{\alpha}} \coloneqq \prod_{i=1}^n x_i^{\alpha_i}, \hspace{1cm} \boldsymbol{\alpha}! \coloneqq \prod_{i=1}^n \alpha_i!,
\] 
where $S\subseteq [n]$ and $\boldsymbol{\alpha} = (\alpha_i) \in \mathbb{N}^n$.
\begin{theorem}[Borcea-Br\"and\'en \cite{BBa}] \label{thm:borbra}
Let $T: \mathbb{C}[x_1,\dots, x_n] \to \mathbb{C}[x_1,\dots, x_n]$ be a linear operator. Then $T$ preserves stability if and only if either
\begin{enumerate}
\item $T$ has range of dimension at most one and is of the form
\[
T(f) = \alpha(f) P,
\]
where $\alpha$ is a linear functional on $\mathbb{C}[x_1,\dots, x_n]$ and $P$ is a stable polynomial, or
\item 
\[
G_T(\mathbf{x}, \mathbf{y}) \coloneqq \sum_{\alpha \in \mathbb{N}^n} (-1)^{\alpha} T(\mathbf{x}^{\boldsymbol{\alpha}}) \frac{\mathbf{y}^{\boldsymbol{\alpha}}}{\boldsymbol{\alpha}!}
\]
belongs to the Laguerre-P\'olya class.
\end{enumerate}
\end{theorem}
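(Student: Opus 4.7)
The plan is to prove both directions of the equivalence separately. For \textbf{sufficiency}, case (i) is immediate since the operator $T(f) = \alpha(f)P$ sends each $f$ either to $0$ or to a nonzero scalar multiple of the stable polynomial $P$. For case (ii), one verifies on monomials $f = \mathbf{x}^{\boldsymbol{\beta}}$, by termwise differentiation of the defining series of $G_T$, the identity
\[
T(f)(\mathbf{x}) = \bigl[f(-\partial_{\mathbf{y}})\,G_T(\mathbf{x}, \mathbf{y})\bigr]_{\mathbf{y}=\mathbf{0}},
\]
and extends it to all $f$ by linearity. When $G_T$ is a stable polynomial in $2n$ variables, one combines Theorem \ref{thm:liebsokal} with Hurwitz's theorem to handle the evaluation $\mathbf{y}=\mathbf{0}$ as a boundary limit along $y_j = i\varepsilon$, $\varepsilon \downarrow 0$, thereby concluding that $T(f)$ is stable whenever $f$ is. When $G_T$ only lies in $\LP$, approximate it by stable polynomials converging uniformly on compact sets and apply Hurwitz a second time to pass to the limit.

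The \textbf{necessity} direction is where the main obstacle lies. Assume $T$ preserves stability and is not of the form (i); the goal is to realize $G_T$ as a locally uniform limit of stable polynomials in $2n$ variables. A natural family is obtained by applying $T$ coefficient-wise in the $\mathbf{x}$-variables to the polynomial $\prod_j(x_j+y_j)^k$:
\[
Q_k(\mathbf{x},\mathbf{y}) \coloneqq T_{\mathbf{x}}\!\left(\prod_{j=1}^n (x_j+y_j)^k\right) = \sum_{\boldsymbol{\alpha} \leq k\mathbf{1}} \binom{k\mathbf{1}}{\boldsymbol{\alpha}}\,T(\mathbf{x}^{\boldsymbol{\alpha}})\,\mathbf{y}^{k\mathbf{1}-\boldsymbol{\alpha}}.
\]
Since $\prod_j(x_j+y_j)^k$ is stable in $(\mathbf{x},\mathbf{y})$ and $T$ preserves stability on $\mathbb{C}[\mathbf{x}]$, one argues that the tensor-product operator $T_{\mathbf{x}} \otimes \mathrm{Id}_{\mathbf{y}}$ preserves stability on $\mathbb{C}[\mathbf{x},\mathbf{y}]$, hence each $Q_k$ is stable. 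The M\"obius-type substitution $y_j \mapsto -k/z_j$ followed by multiplication by $(-k)^{-nk}\mathbf{z}^{k\mathbf{1}}$ produces a polynomial in $(\mathbf{x},\mathbf{z})$ which is stable (the substitution sends upper half-plane to upper half-plane) and which, as $k \to \infty$, converges uniformly on compact sets to $G_T(\mathbf{x},\mathbf{z})$ by virtue of the elementary limit $\binom{k}{\alpha_j}/k^{\alpha_j} \to 1/\alpha_j!$. Hurwitz then places $G_T$ in $\LP$. The crux of the argument—and the hardest step—is to justify that $T_{\mathbf{x}} \otimes \mathrm{Id}_{\mathbf{y}}$ preserves stability on $\mathbb{C}[\mathbf{x},\mathbf{y}]$; the standard route passes to multiaffine polynomials via the Grace--Walsh--Szeg\H{o} coincidence theorem, where stability admits a concrete matrix characterization that propagates cleanly under coefficient-wise application of $T$, and then lifts the conclusion back to arbitrary bidegree by polarization.
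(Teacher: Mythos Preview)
The paper does not prove this theorem at all: Theorem~\ref{thm:borbra} is quoted from Borcea--Br\"and\'en \cite{BBa} as a preliminary black box, exactly like the Lieb--Sokal and Grace--Walsh--Szeg\H{o} theorems surrounding it. There is therefore no ``paper's own proof'' to compare your proposal against.

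For what it is worth, your sketch does track the broad strategy of the original proof in \cite{BBa}: the symbol identity $T(f)=[f(-\partial_{\mathbf y})G_T]_{\mathbf y=\mathbf 0}$ combined with Lieb--Sokal and Hurwitz handles sufficiency, and the necessity argument via $T_{\mathbf x}\prod_j(x_j+y_j)^k$ together with the rescaling $y_j\mapsto -k/z_j$ to recover $G_T$ in the limit is the right shape. The step you flag as the crux---that $T_{\mathbf x}\otimes\mathrm{Id}_{\mathbf y}$ preserves stability on $\mathbb C[\mathbf x,\mathbf y]$---is indeed the nontrivial ingredient, and in \cite{BBa} it is handled essentially as you describe, by reducing to multiaffine polynomials via polarization and Grace--Walsh--Szeg\H{o}. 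But none of this is reproduced in the present paper, which simply invokes the result.
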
 \noindent
A polynomial $f(x_1,\dots, x_n)$ is said to be \textit{multiaffine} if each variable $x_i$ occurs with degree at most one in $f(x_1,\dots, x_n)$, and is called \textit{symmetric} if $f(x_{\sigma(1)},\dots, x_{\sigma(n)}) = f(x_1,\dots,x_n)$ for all $\sigma \in S_n$. The Grace-Walsh-Szeg\"o coincidence theorem is a cornerstone in the theory of stable polynomials frequently used to depolarize symmetries before checking stability.  One version of it is stated below, see \cite{BBa, Wag} for modern references and alternative proofs.
\begin{theorem}[Grace-Walsh-Szeg\"o \cite{Gra,Sze, Wal}] \label{thm:gws}
Let $f(x_1,\dots, x_n) \in \mathbb{C}[x_1,\dots, x_n]$ be a symmetric and multiaffine polynomial. Then $f(x_1,\dots, x_n)$ is stable if and only if $f(x,\dots, x)$ is stable.
\end{theorem}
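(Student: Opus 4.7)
The plan is to prove the coincidence theorem by reducing the nontrivial direction to Grace's classical apolarity theorem. The forward implication---that stability of $f(x_1,\dots,x_n)$ forces stability of $f(x,\dots,x)$---is immediate from the diagonalization closure property recalled in the preliminaries: iterating substitutions $x_i \mapsto x_j$ preserves stability and eventually collapses all variables to one.

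For the substantive direction, I would argue by contradiction. Since $f$ is symmetric and multiaffine, it admits a unique expansion
\[
f(x_1,\dots,x_n) = \sum_{k=0}^n a_k\, e_k(x_1,\dots,x_n),
\]
where $e_k$ is the $k$-th elementary symmetric polynomial, so that $f(x,\dots,x) = \sum_{k=0}^n a_k \binom{n}{k} x^k$. Assume, toward a contradiction, that there exists $\mathbf{z}=(z_1,\dots,z_n)$ with $\operatorname{Im}(z_j)>0$ for every $j$ and $f(\mathbf{z})=0$, while $f(x,\dots,x)$ is stable.

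The key step is to interpret $f(\mathbf{z})=0$ as an apolarity relation between two univariate polynomials. Set $g(x) \coloneqq \prod_{j=1}^n (x-z_j)$ and $h(x) \coloneqq f(x,\dots,x)$. Using $g(x) = \sum_{k=0}^n (-1)^{n-k} e_{n-k}(\mathbf{z})\, x^k$, a direct manipulation of the binomial coefficients shows that the classical Grace apolar bracket of $g$ and $h$ equals $f(\mathbf{z})$ up to sign. Grace's apolarity theorem then asserts that two apolar polynomials of the same degree cannot have their zero sets confined to two disjoint circular regions. All roots of $g$ lie in the open upper half-plane and, by the hypothesized stability of $h$, all roots of $h$ lie in the closed lower half-plane; these are two disjoint circular regions, yielding the desired contradiction.

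The chief obstacle is Grace's apolarity theorem itself, which is classical but nontrivial and would be invoked as a black box. An alternative route avoiding Grace proceeds by induction on the number of distinct coordinates: given a joint zero of $f$ in the upper half-polydisk, symmetry and multiaffineness reduce $f$ as a function of two chosen coordinates $(x_i,x_j)$ to the form $\alpha x_i x_j + \beta(x_i+x_j)+\gamma$, and one then produces a value $w$ in the upper half-plane at which the diagonal restriction vanishes, replacing two variables by a single one. Collapsing all variables this way yields a zero of $f(x,\dots,x)$ in the upper half-plane, contradicting stability. Justifying the half-plane membership of $w$ at each reduction step is essentially where the analytic content of the theorem is concentrated.
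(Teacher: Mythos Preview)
The paper does not prove this theorem. It is stated as a classical result with references to Grace, Szeg\"o, and Walsh, together with a pointer to \cite{BBa,Wag} for modern proofs; no argument is given in the text. There is therefore nothing in the paper to compare your proposal against.

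As for the content of your sketch: both routes you describe are standard and essentially correct. A couple of small points are worth tightening. In the apolarity route, your phrasing of Grace's theorem (``two apolar polynomials cannot have their zero sets confined to two disjoint circular regions'') is slightly imprecise; the correct statement is asymmetric: any circular region containing \emph{all} zeros of one polynomial must contain \emph{at least one} zero of the other. Your application still goes through, since the open upper half-plane is a circular region containing all zeros of $g$ and, by stability, no zero of $h$. You should also note the degenerate cases: if $a_n=0$ then $h$ has formal degree less than $n$, and one must use the projective form of Grace's theorem (with roots at infinity) or handle this separately; if $h\equiv 0$ then all $a_k=0$ and $f\equiv 0$, which is stable by convention.

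In the inductive variable-merging route, the step you flag as ``where the analytic content is concentrated'' is exactly right: given $\alpha z_1z_2+\beta(z_1+z_2)+\gamma=0$ with $\operatorname{Im}z_1,\operatorname{Im}z_2>0$, one must show $\alpha w^2+2\beta w+\gamma=0$ has a root in the open upper half-plane. This is a short but genuine lemma (handled by cases on $\alpha$ and a Vieta/averaging argument), and once it is in place the induction is routine. Your proposal correctly identifies this as the crux without supplying the details.
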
 \noindent

\section{Stability of multivariate $d$-matching polynomials}
A \textit{matching} of an undirected graph $G$ is a subset $M \subseteq E(G)$ such that no two edges in $M$ share a common vertex. Let $V(M) \coloneqq \bigcup_{\{i,j\} \in M} \{i,j \}$ denote the set of vertices in the matching $M$.
For $d \in \mathbb{Z}_{\geq 1}$, the \textit{$d$-matching polynomial} of $G$ is defined by
\[
\mu_{d,G}(x) \coloneqq \mathbb{E}_{H \in \mathcal{C}_{d,G}} \mu_H(x),
\]
where 
\[
\mu_G(x) \coloneqq \sum_{i=0}^{\lfloor n/2 \rfloor} (-1)^i m_i x^{n-2i} \in \mathbb{Z}[x]
\]
and $m_i$ denotes the number of matchings in $G$ of size $i$ with $m_0=1$. In particular if $d=1$, then $\mu_{d,G}(x)$ coincides with the conventional matching polynomial $\mu_G(x)$. The following results are proved in \cite{HPS}.

\begin{theorem}[Hall-Puder-Sawin \cite{HPS}] \label{thm:expdmatch}
Let $\Gamma$ be a finite group and $\pi:\Gamma \to \text{GL}_d(\mathbb{C})$ be an irreducible representation such that $\pi(\Gamma)$ is a complex reflection group, i.e., $\pi(\Gamma)$ is generated by pseudo-reflections. If $G$ is a finite connected multigraph, then
\begin{align} \label{thm:hpsavg}
\mathbb{E}_{\gamma \in \mathcal{C}_{\Gamma,G}} \det(xI - A_{\gamma, \pi}) =\mu_{d,G}(x).
\end{align}
\end{theorem}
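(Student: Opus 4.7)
The plan is to expand $\det(xI - A_{\gamma,\pi})$ via the Leibniz formula, pass to expectation over $\gamma$, and use the pseudo-reflection hypothesis to isolate the matching contributions. Writing $d = \dim \pi$ and $V(H) = V(G) \times [d]$, one has
\[
\det(xI - A_{\gamma,\pi}) = \sum_{\sigma \in S_{nd}} \mathrm{sgn}(\sigma) \prod_{v} (xI - A_{\gamma,\pi})_{v, \sigma(v)}.
\]
Each $\sigma$ decomposes into cycles; the fixed points contribute factors of $x$, while each non-trivial cycle through $(u_1, a_1), \ldots, (u_\ell, a_\ell) \in V(G) \times [d]$ forces $u_1 u_2 \cdots u_\ell$ to trace a closed walk in $G$ and contributes a signed sum over edge sequences of a product $\prod_i \pi(\gamma_{e_i})_{a_i, a_{i+1}}$. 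Grouping permutations by their projected closed walks and summing over the indices $a_1, \ldots, a_\ell \in [d]$ collapses each such cycle-contribution to $-\mathrm{tr}(\pi(\gamma_{e_1}) \pi(\gamma_{e_2}) \cdots \pi(\gamma_{e_\ell}))$.

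Next, I would take the expectation over the uniform $\Gamma$-labeling $\gamma$, noting that the constraint $\gamma(-e) = \gamma(e)^{-1}$ makes the expectation factorize over the distinct undirected edges of $G$. The central step is a representation-theoretic vanishing lemma: under the hypothesis that $\pi$ is irreducible and $\pi(\Gamma)$ is a complex reflection group,
\[
\mathbb{E}_\gamma \, \mathrm{tr}\bigl(\pi(\gamma_{e_1}) \pi(\gamma_{e_2}) \cdots \pi(\gamma_{e_\ell})\bigr) = 0
\]
unless the walk $e_1 e_2 \cdots e_\ell$ reduces to the empty word in the free group on $E^+(G)$ via iterated cancellation of adjacent pairs $(e, -e)$. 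Such reducible closed walks are precisely doubled traversals of subtrees of $G$.

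Combining, the only permutations $\sigma$ surviving the expectation are those whose cycles project to reducible closed walks; this reduces the computation to a purely combinatorial enumeration of multisets of tree-like walks in $G$. A standard walk-to-matching correspondence (pairing each $(e,-e)$ cancellation with an edge of a matching and tracking its lift through the $[d]$-indices) then identifies this enumeration coefficient-by-coefficient with the expansion of $\mu_{d,G}(x) = \mathbb{E}_{H \in \mathcal{C}_{d,G}} \mu_H(x)$, completing the proof.

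The hard part is the vanishing lemma of the second paragraph: it is the only step using the pseudo-reflection hypothesis and requires a character- or invariant-theoretic input. The natural approach is to leverage the polynomiality of the invariant ring of a complex reflection group (Chevalley--Shephard--Todd), which supplies enough structural identities to kill every non-reducible closed word in the average. For general irreducible $\pi$ the vanishing fails, which both explains why the pseudo-reflection hypothesis cannot be relaxed and signals where most of the work sits in a complete proof.
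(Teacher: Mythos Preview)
The paper does not contain a proof of this statement. Theorem~\ref{thm:expdmatch} is stated with the attribution ``Hall--Puder--Sawin \cite{HPS}'' and is introduced by the sentence ``The following results are proved in \cite{HPS}.'' It is quoted as background from that reference, not reproved here. Consequently there is no proof in the present paper against which your proposal can be compared.

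For what it is worth, your outline is broadly in the spirit of the argument in \cite{HPS}: one does expand the expected characteristic polynomial as a signed sum over closed walks weighted by traces $\mathrm{tr}\bigl(\pi(\gamma_{e_1})\cdots\pi(\gamma_{e_\ell})\bigr)$, and the crux is indeed a vanishing statement for the expectation of such traces over uniform $\Gamma$-labelings, with the complex reflection group hypothesis entering through invariant theory (Chevalley--Shephard--Todd). Two cautions, however. First, the expectation does \emph{not} in general factor over undirected edges, since a single edge may appear multiple times (and with both orientations) in a closed walk; what one actually uses is that the variables $\gamma_e$ for distinct undirected edges are independent, so the expectation factors over \emph{edges}, but within each edge one must average a product of several copies of $\pi(\gamma_e)^{\pm 1}$. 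Second, the final ``walk-to-matching correspondence'' you invoke is not the classical Godsil-type bijection for a single graph; one has to show that the surviving (tree-like) closed-walk contributions, summed with the correct weights in $d$, reproduce the average of matching polynomials over $d$-coverings. In \cite{HPS} this is handled by a separate computation, and it is where the dependence on $d$ alone (and not on the specific $\Gamma$ or $\pi$) emerges. Your sketch gestures at both of these steps but does not supply them; a complete proof would need to fill in exactly these two points.
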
 \noindent
\begin{remark}
Remarkably the expected characteristic polynomial in \eqref{thm:hpsavg} depends only on the dimension $d$ of the irreducible representation $\pi$ and not on the particular choice of group $\Gamma$, nor the specifics of the map $\pi$. Real-rooted expected characteristic polynomials have seen a surge of interest recently in light of the Kadison-Singer problem and Ramanujan coverings, see e.g. \cite{AG,HPS,MSS,MSSc, MSSb,MSSd}.
\end{remark} \noindent
\begin{example}
A classical result due to Godsil and Gutman \cite{GG} states that if $A = (a_{ij})$ is the adjacency matrix of a finite simple undirected graph $G$, then
\[
\mathbb{E}_{\mathbf{s}} \det(xI - A^{\mathbf{s}}) = \mu_G(x),
\]
where $A^{\mathbf{s}}_{ij} \coloneqq s_{e}a_{ij}$ for all $e = \{i,j\} \in E(G)$ and  $\mathbf{s} = (s_{e})_{e \in E(G)} \in \{\pm 1 \}^{E(G)}$. In other words, the expected characteristic polynomial over all signings of $G$ equals the matching polynomial of $G$.
In the language of Hall, Puder and Sawin this corresponds to taking $\Gamma = \mathbb{Z}/2\mathbb{Z}$ and $\pi = \text{sgn}:\mathbb{Z}/2\mathbb{Z} \to \text{GL}_1(\mathbb{C})$ to be the sign representation in Theorem \ref{thm:hpsavg}.  
\end{example}
Generalizing and extending the following theorem will be the main focus of this section.
\begin{theorem}[Hall-Puder-Sawin \cite{HPS}] \label{thm:dmatchrealroots}
If $G$ is a finite multigraph with no loops, then $\mu_{d,G}(x)$ is real-rooted.
\end{theorem}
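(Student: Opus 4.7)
The plan is to deduce Theorem \ref{thm:dmatchrealroots} from a stronger multivariate statement, following the strategy announced in the abstract. I would introduce a natural multivariate version $\tilde{\mu}_{d,G}(\mathbf{x})$ of the $d$-matching polynomial in the variables $\mathbf{x}=(x_v)_{v \in V(G)}$, prove it is stable, and conclude by specializing $x_v \mapsto x$ for all $v$. The key definition is $\tilde{\mu}_{d, G}(\mathbf{x}) \coloneqq \mathbb{E}_{H \in \mathcal{C}_{d, G}} \mu_H^{\text{pull}}(\mathbf{x})$, where for each $d$-covering $H$ with projection $p \colon V(H) \to V(G)$ the \emph{pulled-back} matching polynomial is $\mu_H^{\text{pull}}(\mathbf{x}) \coloneqq \sum_M (-1)^{|M|} \prod_{u \in V(H) \setminus V(M)} x_{p(u)}$. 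By construction $\tilde{\mu}_{d, G}(x, \ldots, x) = \mu_{d, G}(x)$, so real-rootedness of $\mu_{d,G}$ will follow from stability of $\tilde{\mu}_{d, G}$ via diagonalization.

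The first substantive step is to establish the multivariate lift of Theorem \ref{thm:expdmatch}:
\[
\tilde{\mu}_{d, G}(\mathbf{x}) = \mathbb{E}_{\gamma \in \mathcal{C}_{\Gamma, G}} \det\bigl(X^{(d)} - A_{\gamma, \pi}\bigr),
\]
where $(\Gamma, \pi)$ satisfies the hypotheses of Theorem \ref{thm:expdmatch} and $X^{(d)}$ is the $nd \times nd$ diagonal matrix repeating $x_v$ a total of $d$ times across the fiber of each $v$. The proof tracks the cycle expansion of the determinant: fixed points of permutations in Leibniz's formula contribute $x_v$ precisely because the no-loops assumption kills the diagonal entries of $A_{\gamma, \pi}$, while nontrivial cycles either average to zero or reproduce the matching contributions, exactly as in the HPS computation.

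For each fixed $\gamma$ the matrix $A_{\gamma, \pi}$ is Hermitian (since $\pi$ is unitary and $\gamma(-e) = \gamma(e)^{-1}$), so $\det(X^{(d)} - A_{\gamma, \pi})$ is stable in $\mathbf{x}$: if $\Im(x_v) > 0$ for all $v$ then $X^{(d)} - A_{\gamma, \pi}$ has positive definite imaginary part and is therefore invertible. The technical heart of the proof is to pass from this per-$\gamma$ stability to stability of the average; sums of stable polynomials are not stable in general, so a structural argument is required. My intended route is to invoke Borcea--Br\"and\'en (Theorem \ref{thm:borbra}) to reduce stability of the averaging operator $T \colon f \mapsto \mathbb{E}_\gamma f(X^{(d)} - A_{\gamma, \pi})$ to membership of its symbol $G_T(\mathbf{x}, \mathbf{y})$ in the Laguerre--P\'olya class, and then verify that membership by expressing $G_T$ via a composition of Lieb--Sokal transforms (Theorem \ref{thm:liebsokal}) and Grace--Walsh--Szeg\"o depolarizations (Theorem \ref{thm:gws}) applied to a manifestly stable multivariate matching polynomial on an auxiliary graph built from $G$.

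The principal obstacle is this last step: per-labeling stability is immediate, but averaging destroys stability in general, and the argument must genuinely exploit the group-theoretic symmetry (the complex-reflection hypothesis on $\pi$) together with the no-loops assumption, which ensures that the symbol of $T$ carries no diagonal perturbations that would obstruct the Laguerre--P\'olya verification. Once stability of $\tilde{\mu}_{d, G}(\mathbf{x})$ is in hand, the specialization $x_v \mapsto x$ preserves stability, and the resulting univariate stable polynomial with real coefficients is real-rooted, yielding Theorem \ref{thm:dmatchrealroots}.
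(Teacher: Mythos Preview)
Your high-level strategy---prove stability of a multivariate lift, then diagonalize---matches the paper. But your proposed route to stability is not the paper's, and at the crucial step it is not a working argument.

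The paper never touches the determinantal representation $\det(X^{(d)}-A_{\gamma,\pi})$ or Theorem~\ref{thm:expdmatch}. Instead, for each covering $\sigma$ it writes the multivariate matching polynomial of the cover as $\mu_H(\mathbf{x})=\text{MAP}[P_{\sigma,G}(\mathbf{x})]$, where $P_{\sigma,G}(\mathbf{x})=\prod_e\prod_k(1-x_{h(e)k}x_{t(e)\sigma_e(k)})$ is the subgraph generating polynomial of the cover (in $nd$ variables, one per vertex of $H$). Since MAP is linear and the edge-labels $\sigma_e$ are independent, the expectation \emph{before} applying MAP factors edge-by-edge:
\[
\mathbb{E}_\sigma P_{\sigma,G}(\mathbf{x})=\prod_{e\in E^+(G)} f_e(\mathbf{x}),\qquad f_e(\mathbf{x})=\sum_{\tau\in S_d}\prod_{k=1}^d(1-x_{h(e)k}x_{t(e)\tau(k)}).
\]
Each $f_e$ is symmetric and multiaffine in each of the two fiber-blocks of variables, so Grace--Walsh--Szeg\H{o} reduces its stability to that of $d!(1-xy)^d$. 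Thus $\mathbb{E}_\sigma P_{\sigma,G}$ is stable as a product of stable polynomials, and applying MAP yields stability of $\mu_{d,G}(\mathbf{x})$. The averaging problem you flag simply does not arise: independence of the edge-labels turns the average into a product.

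By contrast, your proposal has two real gaps. First, the ``multivariate lift of Theorem~\ref{thm:expdmatch}'' is asserted but not argued; the univariate version is already the main technical content of \cite{HPS}, and your one-sentence sketch (fixed points give $x_v$, nontrivial cycles average to zero or give matchings) is not a proof. Second, and more seriously, your plan for the averaging step is not well-posed. The map $T:f\mapsto\mathbb{E}_\gamma f(X^{(d)}-A_{\gamma,\pi})$ is not a linear operator on a polynomial space in the sense of Theorem~\ref{thm:borbra}: it is a fixed convex combination of specific stable polynomials, not the action of an operator on arbitrary inputs, so there is no symbol $G_T$ to compute. The appeal to an unspecified ``auxiliary graph'' does not repair this. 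Absent a genuine mechanism (such as the edge-factorization above, or the interlacing-families argument actually used in \cite{HPS}), per-$\gamma$ stability gives you nothing about the average.
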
 \noindent
\begin{remark}
Hall, Puder and Sawin also showed that the roots of $\mu_{d,G}(x)$ are contained inside the Ramanujan interval of $G$ (see \cite{HPS}).
\end{remark} \noindent
Define the \textit{multivariate $d$-matching polynomial} of $G$ by
\[
\mu_{d,G}(\mathbf{x}) \coloneqq \mathbb{E}_{H \in \mathcal{C}_{d,G}} \mu_H(\mathbf{x}),
\]
where
\[
\mu_G(\mathbf{x}) \coloneqq \sum_{M} (-1)^{|M|} \prod_{i \in [n]\setminus V(M)} x_i,
\]
and the sum runs over all matchings in $G$. 

The real-rootedness of $\mu_{d,G}(x)$ was proved indirectly in \cite{HPS} by considering a limit of interlacing families converging to the left-hand side in Theorem \ref{thm:expdmatch}. In this section we use a more direct approach for proving the real-rootedness of $\mu_{d,G}(x)$. In fact we prove something stronger, namely that $\mu_{d,G}(\mathbf{x})$ is stable. Our proof also holds for graphs with loop edges, thus removing the redundant hypothesis in Theorem \ref{thm:dmatchrealroots}. 

Coverings of graphs with loop edges have interesting properties.  
In particular, consider the $|\Gamma|$-dimensional \textit{regular representation} $\text{reg}: \Gamma \to \text{GL}_{|\Gamma|}(\mathbb{C})$ sending an element $g \in \Gamma$ to the permutation matrix afforded by $g$ acting on $\Gamma$ through left translation $h \mapsto gh$. The \textit{bouquet graph} $B_r$ is the graph consisting of a single vertex with $r$ loop edges. A $(\Gamma,\text{reg})$-covering $A_{\gamma, \text{reg}}$ of $B_r$ is equivalent to the Cayley graph of $\Gamma$ with respect to the set $\gamma(E(B_r))$. In this sense $(\Gamma,\text{reg})$-coverings of finite multigraphs with loops generalize Cayley graphs of finite groups.
\begin{figure}
\begin{tikzpicture}[scale=5]
\draw[very thick, blue!80] (0,0)  to[in=40+35,out=-40+35,loop] (0,0);
\draw[very thick, red!80] (0,0)  to[in=40+145,out=-40+145,loop] (0,0);
\fill (0,0) circle[radius=0.6pt];
\node[xshift=-1.45cm, yshift=0.9cm] at (0,0) {$\sigma_1$};
\node[xshift=1.45cm,yshift=0.9cm] at (0,0) {$\sigma_2$};
\end{tikzpicture}
\begin{tikzpicture}[scale=1.1]
\coordinate (A) at (0+0.8, -0.1);
\coordinate (B) at (1.6+0.8, -0.3);
\coordinate (C) at (1+0.8, 1.2);
\coordinate (D) at (0+3, -0.1+1);
\coordinate (E) at (1.6+3, -0.3+1);
\coordinate (F) at (1+3, 1.2+1);

%\draw [very thick,red!80] (A) -- (B);
%\draw [very thick,red!80] (B) -- (C);
%\draw [very thick,red!80] (C) -- (A);

%\draw [very thick,red!80] (D) -- (E);
%\draw [very thick,red!80] (E) -- (F);
%\draw [very thick,red!80] (F) -- (D);

\begin{scope}[very thick, blue!80, every node/.style={sloped,allow upside down}]
\draw[very thick,blue!80] (A) to[bend left=10, bend angle=45] node {\midarrow} (D);
\draw[very thick,blue!80] (B) to[bend left=10, bend angle=45] node {\midarrow} (E);
\draw[very thick,blue!80] (C) to[bend left=10, bend angle=45] node {\midarrow} (F);
\end{scope}

\begin{scope}[very thick, blue!80, every node/.style={sloped,allow upside down}]
\draw[very thick,blue!80] (D) to[bend left=10, bend angle=45] node {\midarrow} (A);
\draw[very thick,blue!80] (E) to[bend left=10, bend angle=45] node {\midarrow} (B);
\draw[very thick,blue!80] (F) to[bend left=10, bend angle=45] node {\midarrow} (C);
\end{scope}

\begin{scope}[very thick, red!80, every node/.style={sloped,allow upside down}]
  \draw (A)-- node {\midarrow} (C);
  \draw (C)-- node {\midarrow} (B);
  \draw (B)-- node {\midarrow} (A);
\end{scope}

\begin{scope}[very thick, red!80, every node/.style={sloped,allow upside down}]
  \draw (D)-- node {\midarrow}(E);
  \draw (E)-- node {\midarrow} (F);
  \draw (F)-- node {\midarrow} (D);
\end{scope}

\draw[fill=black] (A) circle (0.08cm);
\draw[fill=black] (B) circle (0.08cm);
\draw[fill=black] (C) circle (0.08cm);
\draw[fill=black] (D) circle (0.08cm);
\draw[fill=black] (E) circle (0.08cm);
\draw[fill=black] (F) circle (0.08cm);

\node[xshift=-0.2cm] at (A) {$\iota$};
\node[xshift=-0.6cm] at (C) {$(1 \thinspace 2 \thinspace 3)$};
\node[xshift=0.5cm, yshift=-0.3cm] at (B) {$(1 \thinspace 3 \thinspace 2)$};

\node[xshift=-0.4cm, yshift=0.2cm] at (D) {$(1 \thinspace 2)$};
\node[xshift=0.5cm] at (E) {$(1 \thinspace 3)$};
\node[xshift=-0.4cm, yshift = 0.2cm] at (F) {$(2 \thinspace 3)$};
\end{tikzpicture}
\caption{A $S_3$-labeling $\gamma = (\sigma_1,\sigma_2) = ((1 \thinspace 2 \thinspace 3), (1 \thinspace 2))$ of the bouquet graph $B_2$ (left) and the Cayley graph of $S_3$ with respect to $\{(1 \thinspace 2 \thinspace 3), (1 \thinspace 2) \}$ (right). }
\label{fig:cayley}
\end{figure}
\begin{example} 
Let $\Gamma = S_3$, $G = B_2$ and $\pi = \text{reg}:S_3 \to \text{GL}_6(\mathbb{C})$. Consider the $S_3$-labeling $\gamma = (\sigma_1,\sigma_2) = ((1\thinspace 2 \thinspace 3), (1 \thinspace 2))$ of $B_2$ as in Figure \ref{fig:cayley} (left). Then
\[
A \coloneqq \text{reg}(\sigma_1)+  \text{reg}(\sigma_2) = \bordermatrix{
    & \iota & (1 \thinspace 2) & (1 \thinspace 3) & (2 \thinspace 3) & (1 \thinspace 2 \thinspace 3) & (1 \thinspace 3 \thinspace 2) \cr
    \iota            			  & 0 & \color{blue}1 & 0 & 0 & \color{red}1 & 0 \cr
    (1 \thinspace 2) 			  & \color{blue}1 & 0 & \color{red}1 & 0 & 0 & 0 \cr
    (1 \thinspace 3)			  & 0 & 0 & 0 & \color{red}1 & 0 & \color{blue}1 \cr
    (2 \thinspace 3) 		      & 0 & \color{red}1 & 0 & 0 & \color{blue}1 & 0 \cr
    (1 \thinspace 2 \thinspace 3) & 0 & 0 & 0 & \color{blue}1 & 0 & \color{red}1 \cr
    (1 \thinspace 3 \thinspace 2) & \color{red}1 & 0 & \color{blue}1 & 0 & 0 & 0
  },
\]
is the adjacency matrix of the Cayley graph of $S_3$ with respect to the set $\{\sigma_1,\sigma_2 \}$, see Figure \ref{fig:cayley} (right), and the $(S_3,\text{reg})$-covering $A_{\gamma, \text{reg}}$ is given by $A + A^{T}$.
\end{example} \noindent
Choe, Oxley, Sokal and Wagner \cite{COSW} (see also \cite{BBb}) consider the \textit{multi-affine part} operator
\begin{align*}
\text{MAP}: \mathbb{C}[x_1,\dots, x_n] &\to \mathbb{C}[x_1,\dots, x_n] \\
\sum_{\boldsymbol{\alpha} \in \mathbb{N}^n} a(\boldsymbol{\alpha}) \mathbf{x}^{\boldsymbol{\alpha}} &\mapsto \sum_{\boldsymbol{\alpha} : \alpha_i \leq 1, i \in [n]} a(\boldsymbol{\alpha}) \mathbf{x}^{\boldsymbol{\alpha}}
\end{align*} \noindent
and note that it is a stability preserving linear operator. Indeed the symbol
\[
G_{\text{MAP}}(\mathbf{x}, \mathbf{y}) = \sum_{\boldsymbol{\alpha} \in \mathbb{N}^n} (-1)^{\boldsymbol{\alpha}} \text{MAP}(\mathbf{x}^{\boldsymbol{\alpha}}) \frac{\mathbf{y}^{\boldsymbol{\alpha}}}{\boldsymbol{\alpha}!}  = \sum_{\substack{ \boldsymbol{\alpha} \in \mathbb{N}^n\\ \alpha_i \leq 1 }} (-1)^{\boldsymbol{\alpha}} \mathbf{x}^{\boldsymbol{\alpha}} \mathbf{y}^{\boldsymbol{\alpha}} = \prod_{i=1}^n (1-x_iy_i),
\]
is stable being a product of stable polynomials. Since the range of $\text{MAP}$ has dimension greater than one, it follows that $\text{MAP}$ preserves stability by Theorem \ref{thm:borbra}. Given the identity
\begin{align} \label{eq:subgraphgen}
P_G(\mathbf{x}) \coloneqq \sum_{E \subseteq E(G)} (-1)^{|E|} \prod_{i=1}^n x_i^{\text{deg}_{G[E]}(i)} = \prod_{\{i,j\} \in E(G)} (1- x_ix_j),
\end{align} \noindent
where $G[E]$ is the subgraph of $G$ induced by $E \subseteq E(G)$ and $\text{deg}_{G[E]}(i)$ denotes the degree of $i$ in $G[E]$, we have that

\[
\text{MAP} [P_G(\mathbf{x})] = \mu_G(\mathbf{x}),
\]
and hence that $\mu_G(\mathbf{x})$ is stable being the image of a stable polynomial under $\text{MAP}$. This result is also known as the Heilmann-Lieb theorem \cite{HL}.

By using Theorem \ref{thm:gws} and the stability preserving linear operator $\text{MAP}$ we will show below that $\mu_{d,G}(\mathbf{x})$ is stable.
\begin{theorem} \label{thm:multidmatchpoly}
If $G$ is a finite multigraph (possibly with loops), then $\mu_{d,G}(\mathbf{x})$ is stable for all $d \geq 1$.
\end{theorem}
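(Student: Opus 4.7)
I would imitate the Heilmann-Lieb argument sketched just before the theorem, but applied in a polarized $nd$-variable setting. Introduce variables $\mathbf{y} = (y_{v,i})_{v \in V(G),\, i \in [d]}$ and define
\[
F_G(\mathbf{y}) := \mathbb{E}_{H \in \mathcal{C}_{d,G}} P_H(\mathbf{y}), \qquad P_H(\mathbf{y}) := \prod_{\{u_i,v_j\} \in E(H)} (1 - y_{u,i}\, y_{v,j}),
\]
where a loop at $v_i$ in the cover $H$ contributes the factor $1 - y_{v,i}^2$. Since the labelings $\sigma_e$ are independent across distinct $e \in E^+(G)$, this expectation factors edge-wise:
\[
F_G(\mathbf{y}) = \prod_{e \in E^+(G)} \mathcal{E}_e(\mathbf{y}_{h(e)}, \mathbf{y}_{t(e)}), \qquad \mathcal{E}_e(\mathbf{y}_u,\mathbf{y}_v) := \frac{1}{d!}\sum_{\sigma \in S_d} \prod_{i=1}^d (1 - y_{u,i}\, y_{v,\sigma(i)}).
\]

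The key step is to show each $\mathcal{E}_e$ is stable. For a non-loop edge ($h(e) \ne t(e)$), $\mathcal{E}_e$ is a polynomial in $2d$ distinct variables that is multi-affine in $\mathbf{y}_u \cup \mathbf{y}_v$ and separately symmetric in $\mathbf{y}_u$ (a permutation is absorbed into $\sigma$ by post-composition) and in $\mathbf{y}_v$ (by pre-composition). Two successive applications of Grace-Walsh-Szeg\"o (Theorem \ref{thm:gws}) reduce stability to that of the doubly diagonalized polynomial $\mathcal{E}_e(y,\ldots,y,z,\ldots,z) = (1 - yz)^d$, which is stable because $yz \ne 1$ whenever both $y, z$ lie in the open upper half-plane (their arguments lie in $(0,\pi)$, so the argument of $yz$ lies in $(0,2\pi)$). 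For a loop edge ($h(e) = t(e) = v$), the loop per-edge factor $\mathcal{E}_e(\mathbf{y}_v)$ is the specialization $\mathbf{y}_u = \mathbf{y}_v$ of the non-loop case, and iterated diagonalization preserves stability, so it is stable as well.

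Consequently $F_G(\mathbf{y})$ is stable as a product of stable polynomials. Since the MAP operator is stability preserving (as recorded just after \eqref{eq:subgraphgen}) and linear, one gets
\[
\text{MAP}[F_G(\mathbf{y})] = \mathbb{E}_{H \in \mathcal{C}_{d,G}} \text{MAP}[P_H(\mathbf{y})] = \mathbb{E}_{H \in \mathcal{C}_{d,G}} \mu_H(\mathbf{y}),
\]
using the per-covering identity $\text{MAP}[P_H(\mathbf{y})] = \mu_H(\mathbf{y})$ coming from \eqref{eq:subgraphgen}. This is a stable polynomial in $nd$ variables, which is the natural polarization of $\mu_{d,G}$. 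Finally, the iterated diagonalization $y_{v,i} \mapsto x_v$ preserves stability and produces $\mu_{d,G}(\mathbf{x})$, as required. The main obstacle is the Grace-Walsh-Szeg\"o step for $\mathcal{E}_e$ — verifying the multi-affinity, both symmetries, and computing the diagonal; thereafter the argument is a straightforward chain of standard stability-preserving operations (product, MAP, diagonalization), with loops handled uniformly by specialization from the non-loop case.
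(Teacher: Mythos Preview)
Your proposal follows the paper's proof almost verbatim: factor the expectation over positively oriented edges, verify stability of each per-edge factor via Grace--Walsh--Szeg\"o, and finish with the MAP operator. The one substantive difference is your treatment of loops. The paper drops the fixed-point factors $(1-x_{v,k}^{2})$ from $P_{\sigma,G}$ and then asserts that the resulting loop factor is symmetric \emph{and multiaffine} so that Theorem~\ref{thm:gws} applies; you instead keep those factors (harmless, since MAP annihilates any monomial containing $y_{v,i}^{2}$) and obtain the loop factor directly as the diagonalization $\mathbf{y}_u=\mathbf{y}_v$ of the already-stable non-loop factor. Your route is in fact the cleaner one: the paper's loop polynomial is \emph{not} multiaffine (for $d=2$ it equals $1+(1-x_{v,1}x_{v,2})^{2}$, and this particular polynomial is not even stable), so its appeal to Grace--Walsh--Szeg\"o at that step is not justified as written, whereas your diagonalization argument is. One small remark: your final diagonalization $y_{v,i}\mapsto x_v$ is unnecessary, since the paper's $\mu_{d,G}(\mathbf{x})$ is already defined as a polynomial in the $nd$ variables indexed by $V(H)$.
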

\begin{proof}
For a $d$-covering $\sigma:E(G) \to S_d$, let 
\begin{align*} 
P_{\sigma,G}(\mathbf{x}) &\coloneqq \left (\prod_{e \in E^+(G) \setminus E_{\circ}^{+}(G)} \prod_{k=1}^d (1 - x_{h(e)k} x_{t(e) \sigma_e(k)}) \right ) \times \\ & \hspace{0.5cm} \left ( \prod_{e \in E_{\circ}^{+}(G)} \prod_{k : \sigma_e(k) \neq k} (1 - x_{h(e)k} x_{t(e) \sigma_e(k)}) \right ),
\end{align*} \noindent
where $E_{\circ}^{+}(G) \coloneqq \{ e \in E^{+}(G) : h(e) = t(e) \}$ denotes the set of positively oriented loops in $G$.
Since no matching may contain loops we have excluded the factors $(1-x_{ik}^2)$ from the subgraph generating polynomial $P_H(\mathbf{x})$ in (\ref{eq:subgraphgen}) where $H$ is the covering graph corresponding to $\sigma$. This explains the form of $P_{\sigma, G}(\mathbf{x})$. It follows that
\[
\text{MAP}[P_{\sigma, G}(\mathbf{x})] = \mu_H(\mathbf{x}).
\]
We have
\begin{align*}
\mathbb{E}_{\sigma \in \mathcal{C}_{d, G}} P_{\sigma, G}(\mathbf{x}) &= \sum_{\sigma \in \mathcal{C}_{d, G}} \frac{1}{|\mathcal{C}_{d, G}|} P_{\sigma, G}(\mathbf{x}) \\ &= \frac{1}{|\mathcal{C}_{d, G}|} \left (\prod_{e \in E^{+}(G) \setminus E_{\circ}^{+}(G)} \sum_{\sigma_e \in S_d} \prod_{k=1}^d (1 - x_{h(e)k} x_{t(e) \sigma_e(k)}) \right ) \times \\ & \hspace*{1.4cm}\left ( \prod_{e \in E_{\circ}^{+}(G)} \sum_{\sigma_e \in S_d} \prod_{k : \sigma_e(k) \neq k} (1 - x_{h(e)k} x_{t(e) \sigma_e(k)}) \right ). 
\end{align*} \noindent
For $e\in E^{+}(G) \setminus E_{\circ}^{+}(G)$ the polynomials 
\[
f_e(\mathbf{x}) = \sum_{\sigma_e \in S_d} \prod_{k=1}^d (1-x_{h(e)k}x_{t(e)\sigma_e(k)}),
\] 
are symmetric and multiaffine polynomials in the two sets of variables 
\[
\{ x_{h(e)k} : 1 \leq k \leq d \} \hspace{0.5cm} \text{and} \hspace{0.5cm} \{ x_{t(e)k} : 1 \leq k \leq d \},
\] 
respectively. By Theorem \ref{thm:gws} we have that $f_e(\mathbf{x})$ is stable if and only if 
\[
\sum_{\sigma_e \in S_d} \prod_{k=1}^d (1-xy) = d!(1-xy)^d,
\]
is stable, the latter of which is clear. Similary if $e\in E_{\circ}^{+}(G)$, then $f_e(\mathbf{x})$ is symmetric and multiaffine in the set of variables $\{x_{h(e)k} : 1\leq k\leq d \}$, so checking stability of $f_e(\mathbf{x})$ reduces by Theorem \ref{thm:gws} to checking the stability of $d!(1-x^2)^d$, which is again clear.
Hence $\mathbb{E}_{\sigma \in \mathcal{C}_{S_d, G}} P_{\sigma, G}(\mathbf{x})$ is stable being a product of stable polynomials. Finally we have
\begin{align*}
\text{MAP}\left [ \mathbb{E}_{\sigma \in \mathcal{C}_{d, G}} P_{\sigma, G}(\mathbf{x}) \right ] &= \mathbb{E}_{\sigma \in \mathcal{C}_{d, G}} \text{MAP} [P_{\sigma, G}(\mathbf{x})] \\ &= \mathbb{E}_{H \in \mathcal{C}_{d, G}} \mu_H(\mathbf{x}) \\ &= \mu_{d,G}(\mathbf{x}).
\end{align*} \noindent
Hence $\mu_{d,G}(\mathbf{x})$ is stable.
\end{proof}
\begin{corollary}
If $G$ is a finite multigraph (possibly with loops), then $\mu_{d,G}(x)$ is real-rooted for all $d \geq 1$.
\end{corollary}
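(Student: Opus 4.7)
The plan is to derive the corollary as an immediate consequence of Theorem \ref{thm:multidmatchpoly}, using the standard principle that stability is preserved under diagonalization of variables and that univariate real stable polynomials are precisely the real-rooted ones.

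First I would observe that the univariate $d$-matching polynomial coincides with the diagonal specialization of the multivariate one. Indeed, from the definition of $\mu_G(\mathbf{x})$ we have
\[
\mu_G(x,x,\dots,x) = \sum_{M} (-1)^{|M|} x^{\,n-2|M|} = \sum_{i=0}^{\lfloor n/2 \rfloor} (-1)^i m_i\, x^{n-2i} = \mu_G(x),
\]
and since this holds for every $d$-covering $H$ of $G$, averaging over $H \in \mathcal{C}_{d,G}$ yields $\mu_{d,G}(x,x,\dots,x) = \mu_{d,G}(x)$.

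Next, by Theorem \ref{thm:multidmatchpoly} the multivariate polynomial $\mu_{d,G}(\mathbf{x})$ is stable. Stability is preserved under the diagonalization operations $f \mapsto f|_{x_i = x_j}$, as noted in the preliminaries; iterating this, we conclude that the univariate polynomial $\mu_{d,G}(x) = \mu_{d,G}(x,\dots,x)$ is stable. Moreover, $\mu_{d,G}(x)$ has real (in fact rational) coefficients, since each $\mu_H(x) \in \mathbb{Z}[x]$ and $\mu_{d,G}(x)$ is a uniform average of such polynomials.

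Finally, I would invoke the fact recorded in the preliminaries that a univariate polynomial with real coefficients is stable if and only if it is real-rooted. This immediately yields the real-rootedness of $\mu_{d,G}(x)$. There is no genuine obstacle here since the hard work has already been done in establishing Theorem \ref{thm:multidmatchpoly}; the only point worth emphasizing in the write-up is that the real-coefficient hypothesis is satisfied, so that univariate stability actually coincides with real-rootedness.
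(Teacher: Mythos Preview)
Your proposal is correct and follows exactly the paper's approach: the paper's proof consists of the single line ``Follows by putting $\mathbf{x} = (x,\dots, x)$ in Theorem \ref{thm:multidmatchpoly}'', and you have simply spelled out the implicit steps (diagonalization preserves stability, the coefficients are real, univariate real stability equals real-rootedness). There is nothing to add or correct.
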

\begin{proof}
Follows by putting $\mathbf{x} = (x,\dots, x)$ in Theorem \ref{thm:multidmatchpoly}
\end{proof} \noindent
\section{Stable expected matching polynomials over induced subgraphs}
In the previous section we considered stable averages of multivariate matching polynomials over the set of $d$-sheeted covering graphs of $G$. In this section we consider stable averages over (vertex-) induced subgraphs of $G$. To this end, if $S \subseteq [n]$, let $G[S]$ denote the subgraph of $G$ induced by the vertices in $S$. 
Let $\mathbb{P}$ be a probability distribution on the power set $\mathcal{P}([n]) \coloneqq \{S : S \subseteq [n] \}$. The polynomial
\begin{align*}
Z_{\mathbb{P}}(\mathbf{x}) = \sum_{S \subseteq [n]} \mathbb{P}(S) \mathbf{x}^S \in \mathbb{R}[x_1,\dots, x_n],
\end{align*}
is called the \textit{partition function} of $\mathbb{P}$. The probability distribution $\mathbb{P}$ is called \textit{Rayleigh} if 
\begin{align}\label{eq:rayleigh}
Z_{\mathbb{P}}(\mathbf{x}) \frac{\partial^2 Z_{\mathbb{P}}(\mathbf{x})}{\partial x_i \partial x_j} \leq \frac{\partial Z_{\mathbb{P}}(\mathbf{x})}{\partial x_i} \frac{\partial Z_{\mathbb{P}}(\mathbf{x})}{\partial x_j}
\end{align} \noindent
for all $\mathbf{x} \in \mathbb{R}_{+}^n$, $1 \leq i,j \leq n$ and is called \textit{strong Rayleigh} if \eqref{eq:rayleigh} holds for all $\mathbf{x}\in \mathbb{R}^n$, $1 \leq i,j \leq n$.
\begin{theorem}[Br\"and\'en \cite{Bra}] \label{thm:brandenrayleigh}
A probability distribution $\mathbb{P}$ is strong Rayleigh if and only if $Z_{\mathbb{P}}(\mathbf{x})$ is stable.
\end{theorem}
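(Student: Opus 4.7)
The plan is to leverage the multiaffine structure of $Z_\mathbb{P}(\mathbf{x}) = \sum_{S\subseteq [n]} \mathbb{P}(S)\mathbf{x}^S$ and reduce the equivalence to a pairwise analysis via Möbius transformations. For any $i \neq j$, the multiaffine structure yields a decomposition $Z_\mathbb{P} = Ax_ix_j + Bx_i + Cx_j + D$, where $A,B,C,D$ are multiaffine real polynomials in the remaining $n-2$ variables. A direct expansion gives the key identity
\[
\partial_i Z_\mathbb{P}\cdot\partial_j Z_\mathbb{P} - Z_\mathbb{P}\cdot\partial_i\partial_j Z_\mathbb{P} \;=\; BC - AD,
\]
which is independent of $x_i$ and $x_j$ and drives both implications.

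For the forward direction (stable $\Rightarrow$ strong Rayleigh), I would fix arbitrary real values for the coordinates $k \neq i,j$. Since substituting real values preserves stability, the bivariate restriction $Ax_ix_j + Bx_i + Cx_j + D$ (now with $A,B,C,D\in\mathbb{R}$) is stable. Solving for $x_j$ gives the real Möbius transformation $x_j = -(Bx_i + D)/(Ax_i + C)$, which maps the open upper half-plane into the closed lower half-plane precisely when its determinant $AD - BC$ is $\leq 0$. Stability forces this sign, yielding $BC - AD \geq 0$ on $\mathbb{R}^n$, i.e.\ the strong Rayleigh inequality. The degenerate subcases $A = 0$ or $A = C = 0$ are handled by an analogous direct computation.

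For the converse, the Möbius argument is reversible when $n = 2$, providing the base case of an induction on $n$. For the inductive step I would set $\widehat{Z} := \partial_n Z_\mathbb{P}$ and $\widetilde{Z} := Z_\mathbb{P}|_{x_n=0}$; both are multiaffine in $n-1$ variables. By manipulating the Rayleigh differences and using the fact that, on the face $x_n = 0$, $\Delta_{ij}Z_\mathbb{P}$ specializes to a Rayleigh difference for $\widetilde{Z}$ (and similarly $\widehat{Z}$ arises from a partial-derivative identity), one verifies that both $\widehat{Z}$ and $\widetilde{Z}$ inherit the strong Rayleigh property, so by induction each is stable in $n-1$ variables. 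The decomposition $Z_\mathbb{P} = \widetilde{Z} + x_n\widehat{Z}$ then writes $Z_\mathbb{P}$ as a linear combination of two stable multiaffine polynomials in $n-1$ variables, and the remaining Rayleigh inequalities $\Delta_{jn} Z_\mathbb{P} \geq 0$ for $j \neq n$ supply precisely the common interlacer condition needed to upgrade this to joint stability in all $n$ variables (via the standard characterization of stable multiaffine polynomials of the form $p + xq$ with $p,q$ stable and properly interlacing).

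The main obstacle is the reverse implication: one must extract from purely pointwise pairwise inequalities on $\mathbb{R}^n$ enough control over the complex zero locus of $Z_\mathbb{P}$ to preclude common zeros in the product of open upper half-planes. The delicate technical step is verifying that $\widehat{Z}$ and $\widetilde{Z}$ inherit strong Rayleigh — a short but nontrivial algebraic manipulation — and then recognizing that the ambient inequalities $\Delta_{jn} Z_\mathbb{P} \geq 0$ translate exactly into the interlacing property required to recombine the two stable $(n-1)$-variable summands into a stable $n$-variable polynomial.
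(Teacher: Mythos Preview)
The paper does not prove this theorem; it is quoted as a result of Br\"and\'en from \cite{Bra} and used as a black box in the proof of Proposition~\ref{prop:rayleighstable}. There is therefore no proof in the paper to compare your attempt against.

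For what it is worth, your outline tracks the standard argument for this characterization of real stable multiaffine polynomials. The forward direction is correct as written. The inductive reverse direction is also viable: that $\widetilde{Z}$ and $\widehat{Z}$ inherit the strong Rayleigh property follows, respectively, by specializing $x_n=0$ in $\Delta_{ij}Z$ and by extracting the $x_n^2$-coefficient of $\Delta_{ij}Z$ (a nonnegative quadratic in $x_n$), and one computes $\Delta_{jn}Z=\widehat{Z}\,\partial_j\widetilde{Z}-\widetilde{Z}\,\partial_j\widehat{Z}$, which is exactly the proper-position condition needed for $\widetilde{Z}+x_n\widehat{Z}$ to be stable. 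The only step you leave implicit is this last equivalence --- that pairwise nonnegativity of these Wronskian-type expressions for two real stable multiaffine polynomials in $n-1$ variables is equivalent to proper position --- which itself needs a short argument or a citation, but is standard rather than a genuine gap.
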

\begin{proposition} \label{prop:rayleighstable}
Let $G = (V(G), E(G))$ be a finite undirected graph on $[n]$ and let $\mathbb{P}$ be a probability distribution on $\mathcal{P}([n])$. If $\mathbb{P}$ is strong Rayleigh, then
$\mathbb{E}_{S \subseteq [n]}^{\mathbb{P}} \mu_{G[S]}(\mathbf{x})$ is stable.
\end{proposition}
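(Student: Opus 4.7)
My plan is to derive the proposition from the vertex-partitioning identity
\[
\mu_G(\mathbf{x}+\mathbf{y}) \;=\; \sum_{S\subseteq[n]} \mu_{G[S]}(\mathbf{x})\, \mathbf{y}^{[n]\setminus S},
\]
combined with the multivariate Heilmann--Lieb theorem and Theorem~\ref{thm:liebsokal}. This identity is a direct unpacking of the definition of $\mu_G$: expand $\prod_{i\notin V(M)}(x_i+y_i)$ for each matching $M$, reindex each pair $(M,T)$ with $T\subseteq[n]\setminus V(M)$ by $S := V(M)\cup T$, and note that matchings of $G$ with $V(M)\subseteq S$ are exactly matchings of $G[S]$. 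Since $\mu_G(\mathbf{z})$ is stable by Heilmann--Lieb and the upper half-plane is closed under addition, the substitution $z_i = x_i+y_i$ shows that $\mu_G(\mathbf{x}+\mathbf{y})$ is stable in the $2n$ variables $(\mathbf{x},\mathbf{y})$.

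Next, I would verify that the complementary distribution $\mathbb{P}'(T):=\mathbb{P}([n]\setminus T)$ is again strong Rayleigh. Its partition function satisfies $Z_{\mathbb{P}'}(\mathbf{y}) = \mathbf{y}^{[n]}\, Z_{\mathbb{P}}(1/y_1,\ldots,1/y_n)$, which is a multiaffine polynomial with nonnegative real coefficients. Stability of $Z_{\mathbb{P}'}$ follows from the standard observation that a real-coefficient stable polynomial is nonvanishing in both the upper and lower open half-planes (via complex conjugation symmetry): when $\mathrm{Im}(y_i)>0$ one has $\mathrm{Im}(1/y_i)<0$, so $Z_{\mathbb{P}}(1/\mathbf{y})\neq 0$. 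Theorem~\ref{thm:brandenrayleigh} then yields that $\mathbb{P}'$ is strong Rayleigh.

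With $\mu_G(\mathbf{x}+\mathbf{y})$ and $Z_{\mathbb{P}'}(\mathbf{y})$ both stable, Theorem~\ref{thm:liebsokal} produces the stability-preserving operator $T := Z_{\mathbb{P}'}(\partial_{y_1},\ldots,\partial_{y_n})$ (the extra $\mathbf{x}$-variables are harmless because $Z_{\mathbb{P}'}$ is constant in them, so Lieb--Sokal applies verbatim in $\mathbf{y}$ after freezing $\mathbf{x}$ at any upper half-plane point). Using $\partial_{\mathbf{y}}^U \mathbf{y}^V\big|_{\mathbf{y}=0} = [U=V]$ for $U,V\subseteq[n]$ termwise on the key identity, I obtain
\[
T\bigl[\mu_G(\mathbf{x}+\mathbf{y})\bigr]\Big|_{\mathbf{y}=0}\;=\;\sum_{S\subseteq[n]} \mathbb{P}'([n]\setminus S)\,\mu_{G[S]}(\mathbf{x})\;=\;\mathbb{E}^{\mathbb{P}}_{S\subseteq[n]} \mu_{G[S]}(\mathbf{x}),
\]
and since both $T$ and the real specialization $\mathbf{y}=0$ preserve stability, the claim follows. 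The main obstacle is the closure of strong Rayleigh under complementation, which hinges on the half-plane symmetry for real-coefficient stable polynomials; one could sidestep this by inverting $\mathbf{y}\mapsto 1/\mathbf{y}$ in the key identity and feeding $Z_{\mathbb{P}}(\partial_{\mathbf{y}})$ directly, but the same symmetry argument then resurfaces when checking stability of the transformed left-hand side.
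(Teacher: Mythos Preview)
Your argument is correct, but it takes a genuinely different route from the paper. The paper's proof is a one-step application of Lieb--Sokal: it observes that the operator
\[
T_G \coloneqq \prod_{\{i,j\}\in E(G)} (1-\partial_i\partial_j)
\]
preserves stability (since each factor $1-x_ix_j$ is stable), and that $T_G(\mathbf{x}^S)=\mu_{G[S]}(\mathbf{x})$ for every $S\subseteq[n]$; applying $T_G$ directly to the stable polynomial $Z_{\mathbb{P}}(\mathbf{x})$ then gives the result immediately. Your approach instead doubles the variables, invokes the vertex-partitioning identity $\mu_G(\mathbf{x}+\mathbf{y})=\sum_S \mu_{G[S]}(\mathbf{x})\,\mathbf{y}^{[n]\setminus S}$, passes to the complementary distribution $\mathbb{P}'$, and then applies $Z_{\mathbb{P}'}(\partial_{\mathbf{y}})$ followed by $\mathbf{y}=0$. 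The paper's route is shorter and avoids the detour through complementation and the $2n$-variable polynomial; on the other hand, your argument never needs to identify the graph-specific operator $T_G$, so it would transfer verbatim to any multiaffine stable polynomial $F$ satisfying an analogous splitting identity. One small remark: your parenthetical about ``freezing $\mathbf{x}$'' is not quite the right justification for why $Z_{\mathbb{P}'}(\partial_{\mathbf{y}})$ preserves joint stability in $(\mathbf{x},\mathbf{y})$; the clean argument is simply that $Z_{\mathbb{P}'}(\mathbf{y})$, viewed as a polynomial in all $2n$ variables, is stable, so Theorem~\ref{thm:liebsokal} applies on $\mathbb{C}[\mathbf{x},\mathbf{y}]$.
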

\begin{proof}
By Theorem \ref{thm:liebsokal} the linear operator 
\[
T_G \coloneqq \prod_{\{i,j\} \in E(G)} (1 - \partial_i\partial_j)
\]
preserves stability. Moreover it is easy to see that for $S \subseteq [n]$,
\[
T_G(\mathbf{x}^S) = \mu_{G[S]}(\mathbf{x}).
\]
If $\mathbb{P}$ is strong Rayleigh, then $Z_{\mathbb{P}}(\mathbf{x})$ is stable by Theorem \ref{thm:brandenrayleigh}. Hence
\begin{align*}
T_G(Z_{\mathbb{P}}(\mathbf{x})) &= \sum_{S \subseteq [n]} \mathbb{P}(S) T_G(\mathbf{x}^S) \\ &= \sum_{S \subseteq [n]} \mathbb{P}(S) \mu_{G[S]}(\mathbf{x}) \\ &= \mathbb{E}_{S \in \subseteq [n]}^{\mathbb{P}} \mu_{G[S]}(\mathbf{x}),
\end{align*} \noindent
is stable.
\end{proof}
\begin{corollary} \label{cor:avgreal}
If $\mathbb{P}$ is a strong Rayleigh probability distribution, then $\mathbb{E}_{S \subseteq [n]}^{\mathbb{P}} \mu_{G[S]}(x)$ is real-rooted.
\end{corollary}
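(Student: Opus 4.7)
The plan is to derive this directly from Proposition \ref{prop:rayleighstable} by specialization. The proposition already establishes that the multivariate polynomial $F(\mathbf{x}) \coloneqq \mathbb{E}_{S \subseteq [n]}^{\mathbb{P}} \mu_{G[S]}(\mathbf{x})$ is stable whenever $\mathbb{P}$ is strong Rayleigh, so the entire content of the corollary is the passage from multivariate stability to univariate real-rootedness.

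First I would invoke Proposition \ref{prop:rayleighstable} to obtain stability of $F(\mathbf{x})$. Then I would apply the diagonalization closure property of stable polynomials (noted explicitly in the preliminaries as $f \mapsto f(\mathbf{x})|_{x_i = x_j}$), iterated across all pairs, to conclude that the univariate specialization $F(x, x, \ldots, x) = \mathbb{E}_{S \subseteq [n]}^{\mathbb{P}} \mu_{G[S]}(x)$ is stable as a polynomial in a single variable $x$. Since $\mathbb{P}$ takes real (in fact nonnegative) values and each $\mu_{G[S]}(x)$ has integer coefficients, the resulting univariate polynomial has real coefficients. Finally, as recalled in Section 2.2, a univariate real stable polynomial is precisely a real-rooted polynomial, which gives the desired conclusion.

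There is no real obstacle here: the work has already been done in Proposition \ref{prop:rayleighstable} and in the general theory of stable polynomials. The only point worth being careful about is to note that the diagonalization property is applied inductively to collapse all $n$ variables to a single variable (rather than only identifying two at a time), and that the real-rootedness conclusion requires the resulting univariate polynomial to have real coefficients, which is immediate from the form of $\mu_{G[S]}(x)$.
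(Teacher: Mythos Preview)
Your proposal is correct and is exactly the intended argument: the paper states the corollary without proof, relying implicitly on specializing the stable multivariate polynomial from Proposition~\ref{prop:rayleighstable} to the diagonal and using that univariate real stable polynomials are real-rooted. Your care in noting that the coefficients are real is appropriate, though in this context it is immediate.
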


\begin{example}
The following example demonstrates that the converse to Proposition \ref{prop:rayleighstable} is false. Consider the graph $G = \bullet\textendash \bullet$ on two vertices and one edge. If $\mathbb{P}$ is a probability distribution with $\mathbb{P}(\{1,2\}) = a$, $\mathbb{P}(\{1\}) = b$, $\mathbb{P}(\{1,2\}) = c$ and $\mathbb{P}(\emptyset) = d$, then
\begin{align*}
\mathbb{E}_{S \subseteq [2]}^{\mathbb{P}} \mu_{G[S]}(\mathbf{x}) &= a \mu_G(\mathbf{x}) + b\mu_{G[1]}(\mathbf{x}) + c\mu_{G[2]}(\mathbf{x}) + d\mu_{G[\emptyset]}(\mathbf{x}) \\ &= a(-1+x_1x_2) + bx_1 + cx_2 + d,
\end{align*} \noindent
which is stable if and only of $bc - a(-a+d) \geq 0$. On the other hand
\[
Z_{\mathbb{P}}(\mathbf{x}) = ax_1x_2 + bx_1 + cx_2+d,
\]
is stable if and only if $bc-ad\geq 0$. Hence there exists probability distributions $\mathbb{P}$ which are not strong Rayleigh for which $\mathbb{E}_{S \subseteq [n]}^{\mathbb{P}} \mu_{G[S]}(\mathbf{x})$ is stable. An interesting question would be to characterize the probability distributions for which $\mathbb{E}_{S \subseteq [n]}^{\mathbb{P}} \mu_{G[S]}(\mathbf{x})$ is stable.
\end{example}
\begin{example}
A natural probability distribution $\mathbb{P}$ on the set of induced subgraphs of $G$ is the Bernoulli distribution $\mathbb{B}$ where a vertex $i \in [n]$ is selected independently with probability $p_i$ and not selected with probability $1-p_i$. Note that $\mathbb{B}$ is a strong Rayleigh probability distribution since 
\[
Z_{\mathbb{B}}(\mathbf{x}) = \sum_{S \subseteq [n]} \mathbb{B}(S) \mathbf{x}^S = \sum_{S \subseteq [n]} \prod_{i \in S}p_i \prod_{i \in [n] \setminus S} (1-p_i) \mathbf{x}^S = \prod_{i=1}^n ((1-p_i)+  p_ix_i),
\]
is stable.
Hence $\mathbb{E}_{S \subseteq [n]}^{\mathbb{B}} \mu_{G[S]}(\mathbf{x})$ is stable by Proposition \ref{prop:rayleighstable}.
\end{example} \noindent
Next we shall provide bounds for the real roots of $\mathbb{E}_{S \subseteq [n]}^{\mathbb{P}} \mu_{G[S]}(x)$. 

Let $i \in V(G)$ and define a graph $U_i(G)$ with vertex set being the set of all non-backtracking walks in $G$ starting from $i$, i.e., sequences $(i_0,i_1,\dots, i_k)$ such that $i_0 = i$, $i_r$ and $i_{r+1}$ are adjacent and $i_{r+1} \neq i_{r-1}$. Two such walks are connected by an edge in $U_i(G)$ if one walk extends the other by one vertex, i.e., $(i_0,\dots, i_k, i_{k+1})$ is adjacent to $(i_0,\dots, i_k)$. The graph thus constructed is a tree that covers $G$. It is called the \textit{universal covering tree} of $G$. The universal covering tree $U_i(G)$ of $G$ is unique up to isomorphism and has the property that it covers every other covering of $G$. Thus we henceforth remove reference to the root $i$ and write $U(G)$ for the universal covering tree of $G$. The tree $U(G)$ is countably infinite, unless $G$ is a finite tree, in which case $U(G) = G$. 

The \textit{spectral radius} $r(G)$ of a finite graph $G$ is the largest absolute eigenvalue of the adjacency matrix $A_G$ of $G$.
By a theorem of Mohar \cite{Moh} the spectral radius of an infinite graph $U$ can be defined as follows,
\[
r(U) \coloneqq \text{sup}\{ r(G) : \text{$G$ is a finite induced subgraph of $U$}  \}.
\]
If $G$ is a finite undirected graph, then let $\rho(G) \coloneqq r(U(G))$ denote the spectral radius of its universal covering tree. Say that a probability distribution $\mathbb{P}$ on $\mathcal{P}([n])$ has \textit{constant parity} if the set $\{ |S| : S\subseteq [n], \thickspace \mathbb{P}(S) > 0 \}$ consists of numbers with the same parity (i.e. are either all odd or all even).
\begin{proposition} \label{prop:univspanbnd}
Let $G$ be a finite undirected graph with $n$ vertices and $\mathbb{P}$ a probability distribution on $\mathcal{P}([n])$. Then the real roots of $\mathbb{E}_{S \subseteq [n]}^{\mathbb{P}} \mu_{G[S]}(x)$ are bounded above by $\rho(G)$. Moreover if $\mathbb{P}$ has constant parity, then the real roots of $\mathbb{E}_{S \subseteq [n]}^{\mathbb{P}} \mu_{G[S]}(x)$ are contained in $[-\rho(G), \rho(G)]$.
\end{proposition}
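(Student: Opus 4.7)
The plan is to argue that $\mathbb{E}_{S \subseteq [n]}^{\mathbb{P}} \mu_{G[S]}(x)$ is strictly sign-definite outside the stated interval, by controlling the sign of each summand $\mu_{G[S]}(x)$ and then taking the (nonnegative) average. Two standard facts about the univariate matching polynomial of an arbitrary finite graph $H$ are at the heart of this: $\mu_H(x)$ is monic of degree $|V(H)|$ and real-rooted (Heilmann--Lieb), and, by a theorem of Godsil, its roots are contained in $[-\rho(H),\rho(H)]$.

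The key preliminary point I would establish is the monotonicity $\rho(G[S]) \leq \rho(G)$ for every $S \subseteq [n]$. Indeed, a non-backtracking walk in $G[S]$ starting at $i \in S$ is automatically a non-backtracking walk in $G$, and two such walks are adjacent in $U_i(G[S])$ exactly when they are adjacent in $U_i(G)$. Hence $U(G[S])$ embeds as an induced subgraph of $U(G)$, and Mohar's characterisation of the spectral radius of an infinite graph yields
\[
\rho(G[S]) = r(U(G[S])) \leq r(U(G)) = \rho(G).
\]

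With these tools in place, the upper bound is immediate. If $x > \rho(G)$, then for every $S \subseteq [n]$ every real root of $\mu_{G[S]}(x)$ lies in $[-\rho(G[S]),\rho(G[S])] \subseteq [-\rho(G),\rho(G)]$, and since $\mu_{G[S]}(x)$ is monic of degree $|S|$ this forces $\mu_{G[S]}(x) > 0$. Averaging against the nonnegative measure $\mathbb{P}$ preserves strict positivity, so $\mathbb{E}^{\mathbb{P}}_S \mu_{G[S]}(x) > 0$, and no real root of the expected polynomial can exceed $\rho(G)$.

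For the lower bound under the constant parity hypothesis I would evaluate at $x < -\rho(G)$. Writing $\mu_{G[S]}(x) = \prod_r (x-r)$ over its real roots $r \in [-\rho(G),\rho(G)]$, every factor is strictly negative, so $\mu_{G[S]}(x)$ has sign $(-1)^{|S|}$ and nonzero magnitude. The constant parity of $\mathbb{P}$ forces this sign to agree across all $S$ in its support, and the expectation inherits that strict sign, ruling out real roots below $-\rho(G)$. The subtlest ingredient is the appeal to Godsil's sharpening of the Heilmann--Lieb root bound from $2\sqrt{\Delta(H)-1}$ to $\rho(H)$, which is exactly what lets the covering-tree monotonicity propagate to a bound on the expected polynomial; everything else is a pointwise positivity check.
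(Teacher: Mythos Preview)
Your argument is correct and matches the paper's proof essentially line for line: both establish $\rho(G[S]) \le \rho(G)$ via the embedding of $U(G[S])$ into $U(G)$, invoke the root bound $[-\rho(H),\rho(H)]$ for $\mu_H$, and then use monicity plus the parity hypothesis to obtain sign-definiteness of the expectation outside $[-\rho(G),\rho(G)]$. The only discrepancy is bibliographic---the paper attributes the $\rho(G)$ root bound to Heilmann--Lieb rather than Godsil---but this does not affect the mathematics.
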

\begin{proof}
Let $S \subseteq [n]$. There is a clear injective embedding of $U(G[S])$ into $U(G)$ such that any finite induced subgraph of $U(G[S])$ is an induced subgraph of $U(G)$. Therefore $\rho(G[S])\leq \rho(G)$. Heilmann and Lieb \cite{HL} showed that for any finite graph $G$, the roots of $\mu_G(x)$ are contained in $[-\rho(G), \rho(G)]$. Therefore $\mathbb{E}_{S \subseteq [n]}^{\mathbb{P}} \mu_{G[S]}(x) > 0$ for all $x \in (\rho(G), \infty)$, being a convex combination of monic polynomials with the same property. Hence the real roots of the expectation are bounded above by $\rho(G)$. If $\mathbb{P}$ also has constant parity, then $\mathbb{E}_{S \subseteq [n]}^{\mathbb{P}} \mu_{G[S]}(x)$ is a convex combination of monic polynomials with same degree parity and are therefore, by above, strictly positive or strictly negative on the interval $(-\infty, -\rho(G))$. Hence the real roots are contained in $[-\rho(G), \rho(G)]$.
\end{proof}
\begin{corollary}
Let $G$ be a finite undirected graph on $n$ vertices and $k \in [n]$. Then the uniform average of all matching polynomials over the set of induced size $k$-subgraphs of $G$ is a real-rooted polynomial with all roots contained in the interval $[-\rho(G), \rho(G)]$.
\end{corollary}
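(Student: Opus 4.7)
The plan is to realize the ``uniform average over induced size-$k$ subgraphs'' as $\mathbb{E}^{\mathbb{P}}_{S\subseteq [n]} \mu_{G[S]}(x)$ for a particular probability distribution $\mathbb{P}$ on $\mathcal{P}([n])$, and then check that $\mathbb{P}$ satisfies the hypotheses of both Proposition \ref{prop:rayleighstable} and Proposition \ref{prop:univspanbnd}.

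Concretely, I would let $\mathbb{P}$ be the uniform distribution on the set $\binom{[n]}{k}$ of all $k$-subsets of $[n]$, that is $\mathbb{P}(S) = \binom{n}{k}^{-1}$ if $|S|=k$ and $\mathbb{P}(S) = 0$ otherwise. The associated partition function is then
\[
Z_{\mathbb{P}}(\mathbf{x}) = \binom{n}{k}^{-1} \sum_{\substack{S \subseteq [n]\\ |S|=k}} \mathbf{x}^S = \binom{n}{k}^{-1} e_k(x_1,\dots,x_n),
\]
where $e_k$ denotes the $k$th elementary symmetric polynomial. The key step is therefore to observe that $e_k(x_1,\dots,x_n)$ is stable. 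This is a classical fact, but if a self-contained argument is desired one can deduce it quickly from Theorem \ref{thm:gws}: $e_k$ is symmetric and multiaffine, so its stability reduces to the stability of $e_k(x,\dots,x) = \binom{n}{k} x^k$, which is obvious.

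Once $Z_{\mathbb{P}}(\mathbf{x})$ is known to be stable, Theorem \ref{thm:brandenrayleigh} gives that $\mathbb{P}$ is strong Rayleigh. Applying Proposition \ref{prop:rayleighstable} then shows that
\[
\mathbb{E}^{\mathbb{P}}_{S \subseteq [n]} \mu_{G[S]}(\mathbf{x}) = \binom{n}{k}^{-1} \sum_{\substack{S \subseteq [n]\\ |S|=k}} \mu_{G[S]}(\mathbf{x})
\]
is stable, and specializing $\mathbf{x} = (x,\dots,x)$ (or invoking Corollary \ref{cor:avgreal}) yields real-rootedness of the univariate average.

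Finally, since every $S$ in the support of $\mathbb{P}$ satisfies $|S| = k$, the distribution $\mathbb{P}$ trivially has constant parity. Thus the second half of Proposition \ref{prop:univspanbnd} applies and confines all real roots of the expectation to $[-\rho(G), \rho(G)]$, which combined with real-rootedness gives the claim. I do not anticipate a genuine obstacle here: the entire argument is an assembly of previously established results, and the only non-formal input is the well-known stability of $e_k$.
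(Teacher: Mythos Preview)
Your proposal is correct and matches the paper's own proof essentially line for line: the paper also takes $\mathbb{P}$ uniform on $\binom{[n]}{k}$, identifies $Z_{\mathbb{P}}=\binom{n}{k}^{-1}e_k$, invokes Theorem~\ref{thm:gws} for the stability of $e_k$, and then concludes via Theorem~\ref{thm:brandenrayleigh}, Corollary~\ref{cor:avgreal}, and Proposition~\ref{prop:univspanbnd}. Your explicit remark that $\mathbb{P}$ has constant parity is a detail the paper leaves implicit, but otherwise there is no difference.
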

\begin{proof}
Let $\mathbb{P}$ be the probability distribution on $\mathcal{P}([n])$ with uniform support on $\binom{[n]}{k}$. Then
\[
Z_{\mathbb{P}}(\mathbf{x}) = \frac{1}{\binom{n}{k}} e_k(\mathbf{x}),
\]
where $e_k(\mathbf{x})$ denotes the elementary symmetric polynomial of degree $k$. The polynomial $e_k(\mathbf{x})$ is stable, e.g. by Theorem \ref{thm:gws}. Therefore $\mathbb{P}$ is a strong Rayleigh probability distribution by Theorem \ref{thm:brandenrayleigh}, so the statement follows by Corollary \ref{cor:avgreal} and Proposition \ref{prop:univspanbnd}.
\end{proof}
\section{Stable relaxed matching polynomials}
A hypergraph $H= (V(H), E(H))$ is a set of vertices $V(H) = [n]$ together with a family of subsets $E(H)$ of $V(H)$ called \textit{hyperedges} (or \textit{edges} for short). The \textit{degree} of a vertex $i \in V(H)$ is defined as $\text{deg}_H(i) \coloneqq |\{e \in E(H) : i \in e \}|$.
In analogy with graph matchings, a matching in a hypergraph consists of a subset of edges with empty pairwise intersection.
Although the matching polynomial $\mu_G(x)$ of a graph $G$ is real-rooted, the analogous polynomial for hypergraphs is not real-rooted in general, see e.g. \cite{GZM}. From the point of view of real-rootedness we consider a weaker notion of matchings that provide a natural generalization of the real-rootedness property of $\mu_G(x)$ to hypergraphs. 

Let $H = (V(H),E(H))$ be a hypergraph.
Define a \textit{relaxed matching} in $H$ to be a collection $M = (S_e)_{e \in E}$ of edge subsets such that $E \subseteq E(H)$, $S_e \subseteq e$,  $|S_e| >  1$ and $S_e \cap S_{e'} = \emptyset$ for all pairwise distinct $e,e' \in E$ (see Figure \ref{fig:relaxmatch}). 
\begin{remark}
If $H$ is a graph then the concept of relaxed matching coincides with the conventional notion of graph matching. Note also that a conventional hypergraph matching is a relaxed matching $M = (S_e)_{e \in E}$ for which $S_e = e$ for all $e \in E$.
\end{remark}
\begin{remark}
The subsets $S_e$ in the relaxed matching are labeled by the edge they are chosen from in order to avoid ambiguity. However if $H$ is a \textit{linear hypergraph}, that is, the edges pairwise intersect in at most one vertex, then the subsets uniquely determine the edges they belong to and therefore no labeling is necessary. Graphs and finite projective geometries (viewed as hypergraphs) are examples of linear hypergraphs. 
\end{remark}
\begin{figure}
\begin{tikzpicture}
\coordinate (v1) at (0.5,1.7);
\coordinate (v2) at (2.2,0.75);
\coordinate (v3) at (-1.5,1.5);
\coordinate (v4) at (-2.2,0);
\coordinate (v5) at (-0.5,0.5);
\coordinate (v6) at (-1.5,-1.5);
\coordinate (v7) at (-0.5,-0.65);
\coordinate (v8) at (0.8,-1.2);
\coordinate (v9) at (2.3,-0.8);
\draw (0, 0) node[inner sep=0] {\includegraphics[width=0.5\linewidth]{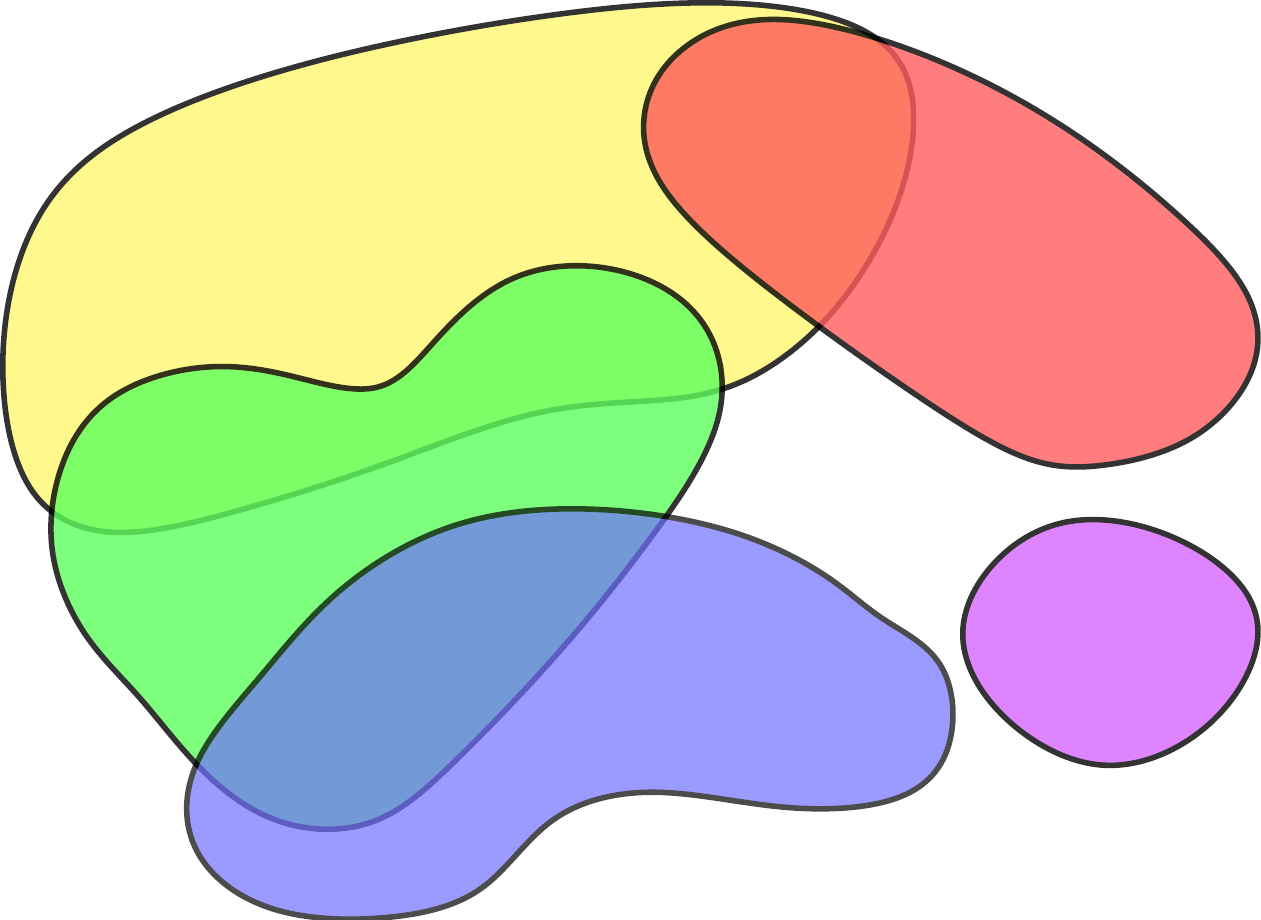}};
\draw[fill=red!80] (v1) circle (0.13cm);
\draw[fill=black] (v1) circle (0.08cm);

\draw[fill=red!80] (v2) circle (0.13cm);
\draw[fill=black] (v2) circle (0.08cm);

\draw[fill=black] (v3) circle (0.08cm);

\draw[fill=green!80] (v4) circle (0.13cm);
\draw[fill=black] (v4) circle (0.08cm);

\draw[fill=green!80] (v5) circle (0.13cm);
\draw[fill=black] (v5) circle (0.08cm);

\draw[fill=blue!80] (v6) circle (0.13cm);
\draw[fill=black] (v6) circle (0.08cm);

\draw[fill=blue!80] (v7) circle (0.13cm);
\draw[fill=black] (v7) circle (0.08cm);

\draw[fill=blue!80] (v8) circle (0.13cm);
\draw[fill=black] (v8) circle (0.08cm);

\draw[fill=black] (v9) circle (0.08cm);

\node[xshift = 0.3cm] at (v1) {$1$};
\node[xshift = 0.3cm] at (v2) {$2$};
\node[xshift = 0.3cm] at (v3) {$3$};
\node[xshift = 0.3cm] at (v4) {$4$};
\node[xshift = 0.3cm] at (v5) {$5$};
\node[xshift = 0.3cm] at (v6) {$6$};
\node[xshift = 0.3cm, yshift = 0.1cm] at (v7) {$7$};
\node[xshift = 0.3cm] at (v8) {$8$};
\node[xshift = 0.3cm] at (v9) {$9$};

\node[xshift = 1.3cm, yshift=-0.3cm] at (v1) {$e_1$};
\node[xshift = -1cm, yshift=-0.5cm] at (v3) {$e_2$};
\node[xshift = -0.1cm, yshift=-0.8cm] at (v4) {$e_3$};
\node[xshift = 1.4cm, yshift=0.1cm] at (v6) {$e_4$};
\node[xshift = 0.cm, yshift=-0.45cm] at (v9) {$e_5$};
\end{tikzpicture}
\caption{A relaxed matching $M =(S_{e_1}, S_{e_3}, S_{e_4})$ in a hypergraph $H$ with $S_{e_1} = \{1,2 \}$, $S_{e_3} = \{ 4,5\}$ and $S_{e_4} = \{ 6,7,8 \}$.}
\label{fig:relaxmatch}
\end{figure}

Let $V(M) \coloneqq \bigcup_{S_e \in M} S_e$ denote the set of vertices in the relaxed matching. Moreover let $m_k(M) \coloneqq |\{ S_e \in M : |S_e| = k \}|$ denote the number of subsets in the relaxed matching of size $k$.
Define the \textit{multivariate relaxed matching polynomial} of $H$ by
\[
\eta_{H}(\mathbf{x}) \coloneqq \sum_{M} (-1)^{|M|} W(M) \prod_{i \in [n]\setminus V(M)} x_i,
\]
where the sum runs over all relaxed matchings of $H$ and
\[
W(M) \coloneqq \prod_{k=1}^{n-1} k^{m_{k+1}(M)}.
\]
Let $\eta_H(x) \coloneqq \eta_H(x\mathbf{1})$ denote the \textit{univariate relaxed matching polynomial}. 
\begin{remark}
Note that $\eta_H(x) = \mu_H(x)$ if $H$ is a graph.
\end{remark}
Our aim is to prove that $\eta_{H}(\mathbf{x})$ is a stable polynomial. In fact we shall prove the stability of a more general polynomial accommodating for arbitrary degree restrictions on each vertex. 

Define a \textit{relaxed subgraph} of $H$ to be a hypergraph $K = (E(K),V(K))$ with edges $E(K) \coloneqq (S_e)_{e \in E}$ such that $E \subseteq E(H)$, $S_e \subseteq e$ and $|S_e| > 1$ for $e \in E$ with $V(K) \coloneqq \bigcup_{e \in E} S_e$. Again if $H$ is a graph, then the notion of a relaxed subgraph coincides with the conventional notion of a (edge-induced) subgraph of $H$. Let $\boldsymbol{\kappa} = (\kappa_1,\dots \kappa_n) \in \mathbb{N}^n$. Define a \textit{relaxed $\boldsymbol{\kappa}$-subgraph} of $H$ to be a relaxed subgraph $K^{\boldsymbol{\kappa}}$ of $H$ such that $\text{deg}_{K^{\boldsymbol{\kappa}}}(i) \leq \kappa_i$ for $i \in V(K^{\boldsymbol{\kappa}})$. Let $m_k(K^{\boldsymbol{\kappa}}) \coloneqq |\{ S_e \in E(K^{\boldsymbol{\kappa}}) : |S_e| = k \}|$ and let $(n)_k = n(n-1)\cdots (n-k+1)$ denote the \textit{Pochhammer symbol}. 

Define the \textit{multivariate relaxed $\boldsymbol{\kappa}$-subgraph polynomial} of $H$ by 
\[
\eta_{H}^{\boldsymbol{\kappa}}(\mathbf{x}) \coloneqq \sum_{K^{\boldsymbol{\kappa}}} (-1)^{|E(K^{\boldsymbol{\kappa}})|} W(K^{\boldsymbol{\kappa}}) \prod_{i \in [n] \setminus V(K^{\boldsymbol{\kappa}})} x_{i}^{\kappa_i - \text{deg}_{K^{\boldsymbol{\kappa}}}(i)},
\]
where the sum runs over all relaxed $\boldsymbol{\kappa}$-subgraphs $K^{\boldsymbol{\kappa}}$ of $H$ and
\[
W(K^{\boldsymbol{\kappa}}) \coloneqq \prod_{k=1}^{n-1} k^{m_{k+1}(K^{\boldsymbol{\kappa}})} \prod_{i \in V(K^{\boldsymbol{\kappa}})} (\kappa_i)_{\text{deg}_{K^{\boldsymbol{\kappa}}}(i)}.
\] 
\begin{remark}
Note that a relaxed matching in $H$ is the same as a relaxed $(1,\dots, 1)$-subgraph of $H$ and that $\eta_{H}^{(1,\dots, 1)}(\mathbf{x}) = \eta_{H}(\mathbf{x})$.
\end{remark} \noindent
In the rest of this section we will adopt the following notation,
\begin{align*}
\boldsymbol{\partial}_S \coloneqq \sum_{i \in S} \partial_i, \hspace{0.2cm} \boldsymbol{\partial}^S \coloneqq \prod_{i \in S} \partial_i, \hspace{0.2cm} \boldsymbol{\partial}^{\boldsymbol{\alpha}} \coloneqq \prod_{i=1}^n \partial_i^{\alpha_i}, 
\end{align*} \noindent
where $S \subseteq [n]$ and $\boldsymbol{\alpha} = (\alpha_i) \in \mathbb{N}^n$.

With abuse of notation we shall let the multiaffine part operator $\text{MAP}$ act analogously on polynomial spaces of differential operators as follows,
\begin{align*}
\text{MAP}:\mathbb{C}[\partial_1,\dots, \partial_n] &\to \mathbb{C}[\partial_1,\dots, \partial_n] \\ \sum_{\boldsymbol{\alpha} \in \mathbb{N}^n} a(\boldsymbol{\alpha}) \boldsymbol{\partial}^{\boldsymbol{\alpha}} &\mapsto \sum_{\boldsymbol{\alpha}: \alpha_i \leq 1, i \in [n]} a(\boldsymbol{\alpha}) \boldsymbol{\partial}^{\boldsymbol{\alpha}}.
\end{align*} \noindent
The following lemma follows from Theorem \ref{thm:liebsokal}.
\begin{lemma}\label{lem:mapstab}
If $P(\boldsymbol{\partial}) \in \mathbb{C}[\partial_1,\dots, \partial_n]$ is a linear operator such that $P(\mathbf{x}) \in \mathbb{C}[x_1,\dots, x_n]$ is stable, then $\text{MAP}\left [ P(\boldsymbol{\partial}) \right ]$ preserves stability.
\end{lemma}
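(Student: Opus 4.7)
The plan is to chain together two already-established facts: first, that the multi-affine part operator $\text{MAP}$ preserves stability on $\mathbb{C}[x_1,\dots,x_n]$ (as argued earlier in the paper by checking via Theorem \ref{thm:borbra} that its symbol $\prod_i(1-x_iy_i)$ is stable); and second, the Lieb-Sokal theorem (Theorem \ref{thm:liebsokal}), which promotes any stable polynomial $f(\mathbf{x})$ to a stability preserving differential operator $f(\boldsymbol{\partial})$.

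The key observation that glues these together is the identity
\[
\text{MAP}\bigl[P(\boldsymbol{\partial})\bigr] \;=\; \bigl(\text{MAP}\bigl[P(\mathbf{x})\bigr]\bigr)\bigl|_{x_i \mapsto \partial_i}.
\]
This is immediate from the definitions: if $P(\mathbf{x}) = \sum_{\boldsymbol{\alpha}\in \mathbb{N}^n} c_{\boldsymbol{\alpha}} \mathbf{x}^{\boldsymbol{\alpha}}$, then both sides are literally equal to $\sum_{\boldsymbol{\alpha}:\alpha_i\leq 1} c_{\boldsymbol{\alpha}} \boldsymbol{\partial}^{\boldsymbol{\alpha}}$, since $\text{MAP}$ acts by retaining exactly the coefficients indexed by $\boldsymbol{\alpha}$ with $\alpha_i\leq 1$ for all $i$, irrespective of whether the symbol of the monomial is $x_i$ or $\partial_i$.

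Given this, the proof proceeds in two one-line steps. Since $P(\mathbf{x})$ is stable and $\text{MAP}$ preserves stability on $\mathbb{C}[x_1,\dots,x_n]$, the polynomial $\text{MAP}[P(\mathbf{x})]$ is stable. Applying Theorem \ref{thm:liebsokal} to this stable polynomial, the operator $\text{MAP}[P(\mathbf{x})]\bigl|_{x_i \mapsto \partial_i}$ preserves stability; by the identity above, this operator is exactly $\text{MAP}[P(\boldsymbol{\partial})]$.

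There is no real obstacle here — both ingredients are already established, and the content of the lemma is essentially the bookkeeping observation that taking the multi-affine part commutes with the formal substitution $x_i \mapsto \partial_i$. The purpose of isolating this as a lemma is simply to have a clean tool for verifying stability of the multivariate relaxed $\boldsymbol{\kappa}$-subgraph polynomial in the sequel, where one will apply it to a carefully chosen stable polynomial $P$ built from factors of the form $(1 - x^{(1)}_i \cdots x^{(k)}_i)$ associated to the hyperedges of $H$.
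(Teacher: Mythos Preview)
Your proof is correct and follows exactly the same approach as the paper: apply the stability-preserving operator $\text{MAP}$ to $P(\mathbf{x})$, then invoke Lieb--Sokal (Theorem~\ref{thm:liebsokal}) to turn the resulting stable polynomial into a stability-preserving differential operator, noting that this operator coincides with $\text{MAP}[P(\boldsymbol{\partial})]$. (Your closing speculation about the form of $P$ in the subsequent application is not quite what the paper uses, but that is extraneous to the lemma itself.)
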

\begin{proof}
Write $P(\boldsymbol{\partial}) = \sum_{\boldsymbol{\alpha} \in \mathbb{N}} a(\boldsymbol{\alpha})\boldsymbol{\partial}^{\boldsymbol{\alpha}}$. Since $\text{MAP}:\mathbb{C}[x_1,\dots,x_n] \to \mathbb{C}[x_1,\dots, x_n]$ is a stability preserver we have that $\text{MAP}\left[ \sum_{\boldsymbol{\alpha} \in \mathbb{N}^n} a(\boldsymbol{\alpha}) \mathbf{x}^{\boldsymbol{\alpha}} \right ] = \sum_{\boldsymbol{\alpha}: \alpha_i \leq 1, i \in [n]} \mathbf{x}^{\boldsymbol{\alpha}}$ is stable and hence by Theorem \ref{thm:liebsokal} that $\sum_{\boldsymbol{\alpha}: \alpha_i \leq 1, i \in [n]} \boldsymbol{\partial}^{\boldsymbol{\alpha}} = \text{MAP}\left [ \sum_{\boldsymbol{\alpha} \in \mathbb{N}^n} a(\boldsymbol{\alpha}) \boldsymbol{\partial}^{\boldsymbol{\alpha}} \right ]$ is a stability preserving linear operator.
\end{proof}
\begin{theorem} \label{thm:wksubgpolystab}
Let $H = (V(H),E(H))$ be a hypergraph and $\boldsymbol{\kappa} = (\kappa_i) \in \mathbb{N}^n$. Then the multivariate relaxed $\boldsymbol{\kappa}$-subgraph polynomial $\eta_H^{\boldsymbol{\kappa}}(\mathbf{x})$ is stable with
\[
\eta_H^{\boldsymbol{\kappa}}(\mathbf{x}) = \prod_{e \in E(H)} \text{MAP}\left [ (1- \boldsymbol{\partial}_e) \prod_{i \in e} (1+\partial_i) \right ] \mathbf{x}^{\boldsymbol{\kappa}}.
\]
\end{theorem}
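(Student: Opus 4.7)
The plan is to establish the identity by a direct expansion of the right-hand side as a differential operator, and then to derive stability as a formal consequence of Lemma \ref{lem:mapstab}. First I would simplify each local factor. The product $\prod_{i\in e}(1+\partial_i) = \sum_{S\subseteq e}\boldsymbol{\partial}^S$ is already multiaffine, so only the term $\boldsymbol{\partial}_e\prod_{i\in e}(1+\partial_i) = \sum_{j\in e}\sum_{S\subseteq e}\partial_j\boldsymbol{\partial}^S$ needs attention: when $j\in S$ this produces a $\partial_j^2$-factor killed by $\text{MAP}$, while $j\notin S$ contributes $\boldsymbol{\partial}^{S\cup\{j\}}$. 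Switching the order of summation gives
\[
\text{MAP}\bigl[(1-\boldsymbol{\partial}_e)\textstyle\prod_{i\in e}(1+\partial_i)\bigr] \;=\; \sum_{S\subseteq e}(1-|S|)\boldsymbol{\partial}^S \;=\; 1 \;-\!\!\sum_{\substack{S\subseteq e\\|S|\geq 2}}\!\!(|S|-1)\boldsymbol{\partial}^S,
\]
with the $|S|=1$ contribution cancelling.

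Next I would expand the global product over $e\in E(H)$. Each term in the expansion is indexed by a set $E\subseteq E(H)$ (the edges contributing a nontrivial piece) together with subsets $S_e\subseteq e$ of size at least $2$ for each $e\in E$ --- precisely the data of a relaxed subgraph $K$ of $H$ with edge set $(S_e)_{e\in E}$. Applying the resulting differential operator to $\mathbf{x}^{\boldsymbol{\kappa}}$ and using $\partial_i^{\alpha_i} x_i^{\kappa_i} = (\kappa_i)_{\alpha_i} x_i^{\kappa_i-\alpha_i}$ (which vanishes if $\alpha_i > \kappa_i$) restricts the surviving terms to those $K$ satisfying $\deg_K(i) \leq \kappa_i$, i.e.\ to relaxed $\boldsymbol{\kappa}$-subgraphs. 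The factor $\prod_{e\in E(K^{\boldsymbol{\kappa}})}(|S_e|-1)$ collects into $\prod_{k=1}^{n-1}k^{m_{k+1}(K^{\boldsymbol{\kappa}})}$ by grouping edges according to their size, and together with the Pochhammer factors $\prod_{i\in V(K^{\boldsymbol{\kappa}})}(\kappa_i)_{\deg_{K^{\boldsymbol{\kappa}}}(i)}$ this is exactly $W(K^{\boldsymbol{\kappa}})$, yielding the claimed identity.

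Stability is then immediate: the polynomial $(1-\sum_{i\in e}x_i)\prod_{i\in e}(1+x_i)$ is a product of stable polynomials (a linear polynomial and multilinear factors all obviously stable), hence stable, so Lemma \ref{lem:mapstab} guarantees that each operator $\text{MAP}\bigl[(1-\boldsymbol{\partial}_e)\prod_{i\in e}(1+\partial_i)\bigr]$ preserves stability. Since these operators are polynomials in the $\partial_i$ they commute, and applying them successively to the stable monomial $\mathbf{x}^{\boldsymbol{\kappa}}$ produces the stable polynomial $\eta_H^{\boldsymbol{\kappa}}(\mathbf{x})$. I do not anticipate a genuine analytic obstacle here; the bulk of the effort is the combinatorial bookkeeping in the middle step, namely verifying that the $(|S_e|-1)$ and Pochhammer factors emerging from the expansion reassemble into the weight $W(K^{\boldsymbol{\kappa}})$ exactly as prescribed in the definition of $\eta_H^{\boldsymbol{\kappa}}(\mathbf{x})$.
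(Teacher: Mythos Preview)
Your proposal is correct and follows essentially the same route as the paper: both compute $\text{MAP}\bigl[(1-\boldsymbol{\partial}_e)\prod_{i\in e}(1+\partial_i)\bigr] = 1 - \sum_{S\subseteq e,\,|S|\geq 2}(|S|-1)\boldsymbol{\partial}^S$, invoke Lemma~\ref{lem:mapstab} on the stable polynomial $(1-\sum_{i\in e}x_i)\prod_{i\in e}(1+x_i)$ to see that each factor preserves stability, and then expand the product over $E(H)$ applied to $\mathbf{x}^{\boldsymbol{\kappa}}$ to recover $\eta_H^{\boldsymbol{\kappa}}(\mathbf{x})$. The only cosmetic difference is that you establish the identity first and deduce stability afterward, whereas the paper interleaves the two.
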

\begin{proof}
Let $e \in E(H)$. Then
\begin{align*}
(1-\boldsymbol{\partial}_e) \prod_{i \in e}(1+\partial_i)  &= (1-\boldsymbol{\partial}_e)\left (1 + \sum_{\emptyset \neq S \subseteq e} \boldsymbol{\partial}^S \right )  \\ &= 1 + \sum_{\substack{\emptyset \neq S \subseteq e\\ |S| > 1}} \boldsymbol{\partial}^S - \sum_{\emptyset \neq S \subseteq e} \boldsymbol{\partial}_e \boldsymbol{\partial}^S \\ &= 1  +  \sum_{\substack{\emptyset \neq S \subseteq e\\ |S| > 1}} \boldsymbol{\partial}^S - \sum_{i \in e} \left ( \sum_{\substack{\emptyset \neq S \subseteq e\\ i \in S}} \partial_i\boldsymbol{\partial}^S + \sum_{\substack{\emptyset \neq S \subseteq e\\ i \not \in S}} \boldsymbol{\partial}^{S \cup i} \right ) \\ &= 1 - \sum_{\substack{\emptyset \neq S \subseteq e \\ |S|>1}} (|S|-1)\boldsymbol{\partial}^S - \sum_{i \in e} \sum_{\substack{\emptyset \neq S \subseteq e\\ i\in S}} \partial_i\boldsymbol{\partial}^S.
\end{align*} \noindent
Thus since
\[
\left (1-\sum_{i\in e}x_i \right )\prod_{i \in e} (1+x_i),
\]
is a stable polynomial, being a product of stable linear factors, it follows by Lemma \ref{lem:mapstab} that
\[
\text{MAP} \left [ (1-\boldsymbol{\partial}_e)\prod_{i \in e}(1+\partial_i) \right ] = 1-\sum_{\substack{\emptyset \neq S \subseteq e \\ |S|>1}} (|S|-1)\boldsymbol{\partial}^S, 
\] 
is stability preserving. Hence
\begin{align*}
\prod_{e \in E(H)} \text{MAP}\left [ (1- \boldsymbol{\partial}_e) \prod_{i \in e} (1+\partial_i) \right ] \mathbf{x}^{\boldsymbol{\kappa}} &= \prod_{e \in E(H)} \left ( 1-\sum_{\substack{\emptyset \neq S \subseteq e \\ |S|>1}} (|S|-1)\boldsymbol{\partial}^S \right )\mathbf{x}^{\boldsymbol{\kappa}}  \\ &= \sum_{E \subseteq E(H)} (-1)^{|E|} \sum_{\substack{(S_e)_{e \in E} \\ S_e \subseteq e \\ |S_e|>1}} \prod_{e \in E} (|S_e|-1) \boldsymbol{\partial}^{S_e} \mathbf{x}^{\boldsymbol{\kappa}} \\ &= \sum_{K^{\boldsymbol{\kappa}}} (-1)^{|E(K^{\boldsymbol{\kappa}})|} W(K^{\boldsymbol{\kappa}}) \prod_{i \in [n]\setminus V(K^{\boldsymbol{\kappa}})} x_i^{\kappa_i- \text{deg}_{K^{\boldsymbol{\kappa}}}(i)} \\ &= \eta_{H}^{\boldsymbol{\kappa}}(\mathbf{x}),
\end{align*} \noindent
is a stable polynomial.
\end{proof}
The following corollary is immediate from Theorem \ref{thm:wksubgpolystab}.
\begin{corollary}
The multivariate relaxed matching polynomial $\eta_H(\mathbf{x})$ is stable with
\[
\eta_H(\mathbf{x}) = \prod_{e \in E(H)} (1-\boldsymbol{\partial}_e) \prod_{i=1}^n (1+\partial_i)^{\text{deg}_H(i)} \mathbf{x}^{\mathbf{1}}.
\]
In particular the univariate relaxed matching polynomial 
\[
\eta_H(x) = \sum_{M} (-1)^{|M|} W(M) x^{n-|V(M)|},
\] 
is a real-rooted polynomial for any hypergraph $H$.
\end{corollary}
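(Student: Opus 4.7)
The plan is to specialize Theorem \ref{thm:wksubgpolystab} to $\boldsymbol{\kappa} = \mathbf{1} = (1, \ldots, 1)$. By the remark preceding the corollary, relaxed matchings of $H$ coincide with relaxed $\mathbf{1}$-subgraphs, so $\eta_H(\mathbf{x}) = \eta_H^{\mathbf{1}}(\mathbf{x})$ and its stability follows immediately from Theorem \ref{thm:wksubgpolystab}.

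To derive the operator formula I would start from the identity $\eta_H(\mathbf{x}) = \prod_{e \in E(H)} \text{MAP}[T_e] \mathbf{x}^{\mathbf{1}}$ supplied by Theorem \ref{thm:wksubgpolystab}, where $T_e \coloneqq (1-\boldsymbol{\partial}_e) \prod_{i \in e}(1 + \partial_i)$. Since these factors are constant-coefficient differential operators they commute, and rearranging yields
\[
\prod_{e \in E(H)} T_e = \prod_{e \in E(H)} (1 - \boldsymbol{\partial}_e) \cdot \prod_{i=1}^n (1 + \partial_i)^{\text{deg}_H(i)},
\]
using that each vertex $i$ lies in exactly $\text{deg}_H(i)$ edges. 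To justify dropping the inner MAP operators, write $T_e = \text{MAP}[T_e] + R_e$, where $R_e$ collects the non-multiaffine monomials, and expand
\[
\prod_{e \in E(H)} T_e = \sum_{S \subseteq E(H)} \left(\prod_{e \in S} R_e\right)\left(\prod_{e \notin S} \text{MAP}[T_e]\right).
\]
Every summand with $S \neq \emptyset$ consists of differential monomials $\boldsymbol{\partial}^{\boldsymbol{\gamma}}$ with some $\gamma_i \geq 2$ (each $R_e$ already contributes such a monomial and subsequent multiplication with constant-coefficient operators only increases exponents), so these summands annihilate the multiaffine polynomial $\mathbf{x}^{\mathbf{1}}$. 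Hence $\prod_e T_e\, \mathbf{x}^{\mathbf{1}} = \prod_e \text{MAP}[T_e]\, \mathbf{x}^{\mathbf{1}} = \eta_H(\mathbf{x})$, yielding the claimed formula.

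For the univariate statement, specializing $\mathbf{x} = x\mathbf{1}$ preserves stability (diagonalization is one of the basic closure properties recalled in Section 2), so $\eta_H(x) = \eta_H(x\mathbf{1})$ is a real stable polynomial in one variable, and therefore real-rooted. The explicit sum formula $\eta_H(x) = \sum_M (-1)^{|M|} W(M) x^{n-|V(M)|}$ follows directly from the multivariate definition and the identity $\prod_{i \in [n] \setminus V(M)} x_i\big|_{\mathbf{x} = x\mathbf{1}} = x^{n - |V(M)|}$. The main subtlety is the step of dropping the inner MAP operators above: while intuitively clear, it requires the observation that a non-multiaffine differential monomial cannot become multiaffine under further multiplication by constant-coefficient operators, and hence cannot survive the action on $\mathbf{x}^{\mathbf{1}}$.
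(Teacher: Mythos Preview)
Your proposal is correct and follows the same approach as the paper, which simply states the corollary is ``immediate from Theorem~\ref{thm:wksubgpolystab}.'' You are supplying the details the paper omits: the specialization $\boldsymbol{\kappa}=\mathbf{1}$, the rearrangement of the commuting factors, and the justification for removing the inner $\mathrm{MAP}$ operators. Your expansion argument is fine; a slightly slicker way to phrase the same point is that constant-coefficient differential operators send multiaffine polynomials to multiaffine polynomials, so at every stage of the product the target remains multiaffine and hence $\mathrm{MAP}[T_e]$ and $T_e$ act identically on it.
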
 \noindent
Below follows a generalization of the standard identities for the multivariate matching polynomial $\mu_G(\mathbf{x})$. Let $i \in V(H)$. Recall that the \textit{(weak) vertex-deletion} $H \setminus i$ is the hypergraph with vertex set $V(H)\setminus i$ and edges $\{ e \cap (V(H) \setminus i) : e \in E(H)\}$.  Let $e \in E(H)$. The \textit{edge-deletion} $H \setminus e$ is the subhypergraph of $H$ with vertex set $V(H)$ and edges $E(H)\setminus e$.
%Let $N_H[v] \coloneqq \{ v \} \cup \bigcup_{e \in E(H): v \in e} e$ denote the \textit{closed neighbourhood} of $v$ in $H$ and $N_H(v)\coloneqq N_H[v]\setminus v$ the \textit{open neighbourhood} of $v$ in $H$.
Let $I_H(i) \coloneqq \{ e \in E(H): i \in e\}$ denote the \textit{incidence set} of $i \in V(H)$. The following identities are straightforward to verify.
\begin{proposition}
Let $H=  (V(H),E(H))$ be a hypergraph, $i \in V(H)$ and $e \in E(H)$. Then $\eta_H(\mathbf{x})$ satisfies the following identities:
\begin{enumerate}
\item $\displaystyle \eta_H(\mathbf{x}) = \eta_{H\setminus e}(\mathbf{x}) - \sum_{\substack{S \subseteq e\\ |S|>1}} (|S|-1)\eta_{(H\setminus e)\setminus S}(\mathbf{x})$,
\item $\displaystyle \eta_H(\mathbf{x}) = x_i\eta_{H\setminus i}(\mathbf{x}) - \sum_{e \in I_H(i)} \sum_{\substack{S \subseteq e \\ i \in S \\ |S|> 1}} (|S|-1) \eta_{(H\setminus e) \setminus S}(\mathbf{x})$,
\item $\displaystyle \eta_{H_1 \sqcup H_2}(\mathbf{x}) = \eta_{H_1}(\mathbf{x}) \eta_{H_2}(\mathbf{x})$,
\item $\displaystyle \partial_i \eta_H(\mathbf{x}) = \eta_{H\setminus i}(\mathbf{x})$.
\end{enumerate}
\end{proposition}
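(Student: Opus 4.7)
The plan is to prove all four identities directly from the combinatorial definition
\[
\eta_H(\mathbf{x}) = \sum_{M} (-1)^{|M|} W(M) \prod_{j \in [n]\setminus V(M)} x_j, \qquad W(M) = \prod_{S_e \in M} (|S_e|-1),
\]
by partitioning the set of relaxed matchings of $H$ according to how each matching interacts with the designated edge, vertex, or connected component. I would treat the identities in the order (iii), (iv), (i), (ii), since each subsequent identity is the natural case analysis that builds on the previous observations.

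Identities (iii) and (iv) are the simplest. For (iii), a relaxed matching of $H_1 \sqcup H_2$ uniquely decomposes as an ordered pair $(M_1, M_2)$ of relaxed matchings in the two components, and $|M|$, $W(M)$, and the vertex product all split multiplicatively, giving $\eta_{H_1}(\mathbf{x}) \eta_{H_2}(\mathbf{x})$. For (iv), the operator $\partial_i$ annihilates terms with $i \in V(M)$ and strips the factor $x_i$ from the rest. The surviving matchings are in bijection with relaxed matchings of $H \setminus i$ (on each edge $e$ containing $i$, the allowed subsets $S_e \subseteq e$ with $i \notin S_e$ are precisely the subsets of $e \setminus i$ of size $>1$), preserving signs and weights.

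For (i), I would partition the relaxed matchings of $H$ according to the subset $S_e$ that the edge $e$ contributes. Matchings in which $e$ contributes nothing are exactly the relaxed matchings of $H \setminus e$ and sum to $\eta_{H \setminus e}(\mathbf{x})$. Matchings in which $e$ contributes a specific $S \subseteq e$ with $|S|>1$ are in bijection with relaxed matchings of $(H \setminus e) \setminus S$, acquiring an extra sign $-1$ and weight $(|S|-1)$; summing over $S$ yields the correction term. Identity (ii) is the parallel case analysis pivoting on a vertex: matchings with $i \notin V(M)$ contribute $x_i\, \eta_{H \setminus i}(\mathbf{x})$ (using (iv), or just by reading off the factor $x_i$ from the vertex product), while matchings with $i \in V(M)$ are indexed by the unique pair $(e, S_e)$ with $i \in S_e$, where $e$ ranges over $I_H(i)$ and $S_e$ over subsets of $e$ of size $>1$ containing $i$.

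No genuine obstacle arises; the only point demanding care is that when one removes both an edge $e$ and the vertex set $S$, the residual vertex product is tracked correctly, i.e.\ $(V(H) \setminus S) \setminus V(M \setminus \{S_e\}) = V(H) \setminus V(M)$ for any $M$ with $S_e = S$. Once this is observed, all four identities are immediate from the definitions.
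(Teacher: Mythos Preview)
Your proposal is correct and matches the paper's approach: the paper does not give a proof at all, merely stating that ``the following identities are straightforward to verify,'' and your direct combinatorial partitioning of the relaxed matchings is exactly the routine verification the paper has in mind. The only minor bookkeeping points you flagged (tracking the residual vertex product and checking that matchings avoiding $i$ biject with relaxed matchings of the weak vertex-deletion $H\setminus i$) are indeed the only places requiring any care, and you have handled them correctly.
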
 \noindent
It would be interesting to understand what parts of the matching theory for graphs can be extended to relaxed matchings.
\newline \newline
\textbf{Acknowledgements:} The author would like to thank Petter Br\"and\'en for helpful discussions and the anonymous referee for pointing out an issue with Theorem \ref{thm:multidmatchpoly} in an earlier version of this paper.

\end{document}